\tikzset{new style 0/.style={circle, draw=black!20, fill=black!3, very thick, minimum size=1mm}}
\tikzset{simple/.style={thin}}
\newtheorem{theorem}{Theorem}[section]
\newtheorem{definition}[theorem]{Definition}
\newtheorem{corollary}{Corollary}[theorem]
\newtheorem{proposition}[theorem]{Proposition}
\newtheorem{lemma}[theorem]{Lemma}
\newtheorem{remark}[theorem]{Remark}
\newtheorem{example}[theorem]{Example}
\title{Modular Symbols over Function Fields of Elliptic Curves}
\author{Seong Eun Jung }
\date{}
\begin{document}
\begin{sloppypar}
\maketitle
    \begin{abstract}
        Let $k =\mathbb{F}_q$ be the finite field of $q$ elements and $E$ an elliptic curve over $k$. Let $F = k(E)$ be the function field over $E$ and let $\mathcal{O} = k[E]$ be the ring of integers. We fix the place at $\infty$ of $F$ and let $F_{\infty}$ be the completion. The group $\overline\Gamma = {\rm{GL}}_2(\mathcal{O})$ acts on $\mathcal{T}$, the Bruhat-Tits building of ${\rm{PGL}}_2(F_{\infty})$. In this article, we construct the group of modular symbols over $\Gamma$, a congruence subgroup of $\overline\Gamma$. We prove that this space is given by an explicit set of generators and relations among them.
    \end{abstract}

\section{Introduction}
Modular symbols have been defined over various settings. We begin with mdoular symbols over $\mathbb{Q}$, also known as \textit{classical modular symbols}. We extend the complex upper half plane $\mathcal{H}$ to $\mathcal{H^*}:= \mathcal{H} \cup \mathbb{P}^1(\mathbb{Q})$. We call any point in $\mathbb{P}^1(\mathbb{Q})$ a cusp. The modular group ${\rm{SL}_2}(\mathbb{Z})$, or any of its congruence subgroups, which we call $\Gamma$, acts on $\mathcal{H^*}$. The quotient space is the modular curve $X := \Gamma\backslash\mathcal{H^*}$. The geodesic from a cusp $\alpha$ to a cusp $\beta$ on $\mathcal{H^*}$ projects to an oriented path on $X$. These geodesics determine a class in $H_1(X, \text{cusps}; \mathbb{Z})$, the relative first homology group of $X$ with respect to its cusps. A modular symbol is a class in this homology group. (See \cite{manin} for more details).

Over the rational function field $F = \mathbb{F}_q(T)$, the analog to $\mathcal{H}$ is the Bruhat-Tits building of ${\rm{PGL}_2}(F_{\infty})$, where $F_{\infty}$ is the completion of $F$ with respect to the place at $\infty$. This building is the $(q+1)$-regular infinite tree, which we denote $\mathcal{T}$. Fix a vertex $v$ on $\mathcal{T}$. Any infinite non-backtracking path starting at $v$ is called an \textit{end}, which corresponds to $\mathbb{P}^1(F_{\infty})$. The ends corresponding to 
$\alpha \in \mathbb{P}^1(F)$ are called \textit{rational ends}, and we call $\alpha$ a \textit{cusp}. There is a unique non-backtracking path on $\mathcal{T}$ from a cusp $\alpha$ to another cusp $\beta$. This is the analog to a geodesic over $\mathcal{H^*}$. Let $\mathcal{O} = \mathbb{F}_q[T]$ and let $\Gamma$ be a congruence subgroup of ${\rm{GL}_2}(\mathcal{O})$. The group $\Gamma$ acts on $\mathcal{T}$. The path from $\alpha$ to $\beta$ determines a class in $H_1(\Gamma\backslash\mathcal{T}, \text{cusps};  \mathbb{Z})$, which are the modular symbols in this setting. (See \cite{teitelbaum} for more details).  

In both cases, Manin and Teitelbaum showed that the group of modular symbols is given by a set of generators and relations among them that is finite modulo the action of $\Gamma$. This allows for explicit computation of the homology groups. Another application is the computation of Hecke operators. There is a subgroup of the classical symbols that is dual to $\mathbb{S}_2(\Gamma)$, the group of weight two cuspforms over $\Gamma$. There is an action of the Hecke operators on the modular symbols that is dual to the action of the cuspforms, and this action can be represented as finite matrices, allowing us to compute the Hecke eigenvalues and eigenvectors. The eigenvectors are cuspforms corresponding to certain abelian varieties. 

In this article, we give an analogous result for modular symbols over the function field of an elliptic curve. Given an elliptic curve $E$ over $k = \mathbb{F}_q$, let $\mathcal{O} = k[E]$ be the affine coordinate ring and $F = k(F)$ its field of fractions. Again, we fix the place at $\infty$ and let $F_{\infty}$ be the completion of $F$ with respect to this place. The analog of $\mathcal{H}$ is the Bruhat-Tits building over ${\rm{PGL}_2}(F_{\infty})$. This is again the $(q+1)$-regular infinite tree, which we call $\mathcal{T}$. The discrete group $\overline\Gamma = {\rm{GL}}_2(\mathcal{O})$ acts on the tree $\mathcal{T}$. Takahashi \cite{takahashi} gives the construction of the quotient space $\overline\Gamma\backslash\mathcal{T}$, which can be identified with a subtree of $\mathcal{T}$ that we call $\mathcal{S}$. 

Like the rational function field case, any end corresponds to an element in $\mathbb{P}^1(F)$, which we call a cusp. For any two cusps $\alpha, \beta$, there is again a unique non-backtracking path from $\alpha$ to $\beta$. Any congruence subgroup $\Gamma \subseteq \overline\Gamma$ acts on $\mathcal{T}$, meaning we could define a modular symbol in this setting to be a class in $H_1(\Gamma\backslash\mathcal{T}, \text{cusps};  \mathbb{Z})$. Kondo and Yasuda \cite{kondo} provide a similar definition for modular symbols over elliptic function fields.

\begin{definition}\textup{\cite[Definition 4.4]{kondo}}
The group of modular symbols $MS(\Gamma)_{\mathbb{Z}}$ is the submodule inside $H^{BM}_1(\Gamma/\mathcal{T}, \mathbb{Z})$, the first Borel-Moore homology group, generated by the classes of the ordered pair of cusps $\alpha, \beta \in \mathbb{P}^1(F)$. \end{definition}
According to \cite[Theorem 4.5.1]{kondo}, we have:
\begin{equation}
H^{BM}_1(\Gamma/\mathcal{T}, \mathbb{Z}) = MS(\Gamma)_{\mathbb{Z}} \text{ and } H^{BM}_1(\Gamma/\mathcal{T}, \mathbb{Q}) = MS(\Gamma)_{\mathbb{Z}} \otimes \mathbb{Q}.
\end{equation}

To achieve the goal of this article, we will use the following definition. Let the group of modular symbols, denoted $\mathbb{M}_2$, be the $\mathbb{Q}$-vector space generated by ordered pairs of cusps modulo certain relations as Stein \cite{stein} did. The group $\Gamma$ acts on the pairs, and taking $\mathbb{M}_2$ modulo the coinvariants gives us a group, which we denote as $\mathbb{M}_2(\Gamma)$, that serves as an analog to Kondo and Yasuda's group $MS(\Gamma)_{\mathbb{Z}} \otimes \mathbb{Q}$. The goal of this article can be reformulated as providing a presentation of $\mathbb{M}_2(\Gamma)$ using a set of generators and relations between them that are finite modulo the action of $\Gamma$. 

We will show that there are four types of generators, which we call \textit{reduced symbols}. After defining the reduced symbols, we will prove the following result.
\begin{theorem}[Theorem \ref{uni sum}]\label{main 1}
    Any modular symbol can be written as a finite sum of the reduced symbols.
\end{theorem}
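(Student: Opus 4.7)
The plan is to decompose the unique non-backtracking path $p_{\alpha\beta}$ in $\mathcal{T}$ joining the two cusps and exploit the explicit shape of $\mathcal{S} = \Gamma\backslash\mathcal{T}$ from Takahashi's description. First I would recall that $\mathcal{S}$ consists of a finite "core" together with finitely many infinite cusp rays, one per $\Gamma$-orbit of $\mathbb{P}^1(F)$. In particular the set of cusp classes is finite, so after fixing a complete set of cusp representatives I can use the two basic relations on modular symbols --- $\Gamma$-invariance $\{\gamma\alpha,\gamma\beta\}=\{\alpha,\beta\}$ and additivity $\{\alpha,\beta\}=\{\alpha,\mu\}+\{\mu,\beta\}$ --- to assume that $\alpha$ and $\beta$ are chosen representatives.

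Next, I would walk along $p_{\alpha\beta}$ and project edge-by-edge onto $\mathcal{S}$. The image naturally splits into three pieces: an initial \emph{tail} that runs to infinity along the cusp ray determined by $\alpha$, a \emph{middle segment} of finitely many edges contained in the core (together with possible finite excursions along rays), and a terminal tail running to infinity along the ray of $\beta$. Each edge of the middle segment lifts to an edge of $\mathcal{T}$ which, after translating by a suitable $\gamma\in\Gamma$, matches one of the prescribed reduced symbols of "bounded" (core/ray) type; by additivity and $\Gamma$-invariance, the middle segment therefore contributes a finite sum of reduced symbols. The two tails, each an \emph{infinite} sequence of edges, will be absorbed into the single reduced symbol of "cusp ray" type attached to the corresponding cusp class, using that any two asymptotic paths toward the same cusp differ only by a finite telescoping sum within the core.

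The main obstacle is precisely this tail step: because modular symbols live in Borel--Moore homology, one cannot naively break the infinite tail into an infinite $\mathbb{Z}$-sum of edge symbols. The correct argument is to show that any path entering the cusp ray of $\alpha$ past a certain depth is, up to a $\Gamma$-translate and a finite correction inside the core, the standard cusp-ray reduced symbol attached to $\alpha$. This in turn requires the stabilizer calculation along the ray (again from Takahashi) to ensure that the $\Gamma$-action on sufficiently deep edges is transitive on the fibres of $\pi\colon \mathcal{T}\to\mathcal{S}$, so that each deep edge can indeed be normalised to the distinguished one. Once this "tail = reduced cusp symbol $+$ finite correction" identity is established for both $\alpha$ and $\beta$, the full decomposition of $\{\alpha,\beta\}$ as a finite sum of reduced symbols is immediate: the two tail symbols together with the finitely many middle-segment symbols exhaust the class, proving Theorem \ref{main 1}.
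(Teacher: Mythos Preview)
Your proposal rests on a misreading of what the paper calls a \emph{reduced symbol}. A reduced symbol is not an edge class, nor a single cusp ray; it is itself a full modular symbol $\{\alpha,\beta\}$, i.e.\ a bi-infinite geodesic in $\mathcal{T}$ from one cusp to another, distinguished only by the shape of its vertex-label sequence (types $e$, $s$, $o$, $ns$ in Definition~\ref{reducedsymbol}). There is no ``bounded (core/ray) type'' and no ``cusp-ray type''; every reduced symbol already has two infinite tails. Consequently the decomposition you propose --- match each middle edge to a reduced symbol, then absorb each tail into a one-ended reduced symbol --- has no target: single edges are not reduced symbols, and there is no one-ended reduced symbol to absorb a tail into. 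The Borel--Moore subtlety you flag is an artefact of this misreading; once the decomposition is done at the level of cusp-to-cusp symbols rather than edges, no infinite sums appear.

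The paper's argument never passes to edges. It uses Serre's invariant $N$ on vertices: along $\mathcal{P}_{\alpha,\beta}$ the function $i\mapsto N(v_i)$ has finitely many local minima (each at a \emph{minimal vertex} of type $e$, $s$, $o$, or $ns$) and finitely many local maxima. A reduced symbol is exactly a modular symbol whose profile has a single minimum. If $\mathcal{P}_{\alpha,\beta}$ has more than one, pick the first local maximum; from the vertex there one can reach a cusp $\gamma$ without crossing another minimal vertex, so $\{\alpha,\gamma\}$ is reduced and $\{\alpha,\beta\}=\{\alpha,\gamma\}+\{\gamma,\beta\}$ with $\{\gamma,\beta\}$ having one fewer local minimum. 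Induction finishes the proof. If you want to salvage your outline, replace the edge-by-edge projection with this invariant-profile picture: the ``middle segment'' should be cut at local maxima into cusp-to-cusp pieces, not into edges.
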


To find the relations, we construct a CW complex $\mathcal{K}$ using the action of $\overline\Gamma$ on $\mathcal{T}$ where the vertices correspond to the cusps and the 1-cells to the reduced symbols. There is a $\Gamma$-action on the complex. Using \cite[Chapter VII]{brown}, we construct a spectral sequence over $\mathcal{K}$ and compute the equivariant relative homology groups, which allows us to explicitly define $\mathbb{M}_2(\Gamma)$. This gives us the following result to be made explicit.

\begin{theorem}[Theorem \ref{main result}]\label{main 2}
    The group of modular symbols $\mathbb{M}_2(\Gamma)$ is given by a set of generators and relations between them that are finite modulo the action of $\Gamma$.
\end{theorem}

The complex $\mathcal{K}$ has the advantage of capturing all relations between the reduced symbols, which $\mathcal{T}$ does not. This comes at the expense of not making any claims about either $H_1(\Gamma\backslash\mathcal{T}, \text{cusps};  \mathbb{Z})$ or $H^{BM}_1(\Gamma/\mathcal{T}, \mathbb{Z})$. However, we will show that we are able to compute the Steinberg homology of $\Gamma$ (to be defined in Definition \ref{def: steinberg hom q}) and that a modular symbol determines a class in this group.

The motivation of proving Theorem \ref{main 2} is to be able to explicitly compute modular symbols over congruence subgroups such as $\Gamma_0(\mathfrak{p})$, the group of elements in $\rm{{GL}_2}(\mathcal{O})$ congruent to an upper triangular matrix modulo $\mathfrak{p}$, a prime ideal in $\mathcal{O}$. Theorem \ref{main 2} also allows us to compute the Steinberg homology of $\Gamma_0(\mathfrak{p})$ and the action of the Hecke operators. The eigenvalues of the Hecke operators will give information about certain automorphic forms over ${\rm{GL}_2}(F)$ as defined in \cite[Chapter 5]{kondo}. 

The article is structured as follows. In Section \ref{sec: modular symbols}, we establish notation and label all the vertices of $\mathcal{T}$. We define modular symbols, the reduced symbols, and the Steinberg homology. We show any modular symbol can be written as a sum of the reduced symbols (Theorem \ref{main 1}). In Section \ref{sec: relations among symbols}, we construct $\mathcal{K}$, the spectral sequence, and the equivariant relative homology group. This allows us to write the relations among the reduced symbols and prove Theorem \ref{main 2}. Finally, in Section 4, we finish a proof needed to show that we indeed compute the Steinberg homology.

\section{Modular Symbols}\label{sec: modular symbols}
\subsection{The Tree $\mathcal{T}$}
Let $f(x,y) = y^2 + a_1xy + a_3y - x^3 - a_2x^2 - a_4x - a_6 = 0$ define a nonsingular elliptic curve $E$ with $a_i \in k = \mathbb{F}_q$. Let $k[E]$ be the affine coordinate ring and $k(E)$ its field of fractions. Let $t = x/y$ be a local uniformizer at the point at $\infty$. Let $F_{\infty}$ be the completion of $F = k(E)$ at $\infty$. This allows us to embed $\mathcal{O} = k[E]$ into $F_{\infty} = k((t))$ such that $\text{ord}(x) = -2$ and $\text{ord}(y) = -3$. Let $\overline\Gamma = {\rm{GL}}_2(\mathcal{O})$, $\Gamma$ a congruence subgroup of $\overline\Gamma$, and let $\mathcal{O}_{\infty} = k[[t]]$ be the valuation ring. Furthermore, let $K = {\rm{GL}}_2(\mathcal{O_{\infty}})$ and let $G = {\rm{GL}}_2(F_{\infty})$. 

Now $G/KZ$ can be identified with the vertices of the tree $\mathcal{T}$. A vertex corresponds to a coset $gKZ$ and has $q+1$ adjacent vertices (see Figure \ref{tree}). There is an edge between two vertices $g_1KZ, g_2KZ$ if $g_1^{-1}g_2 = 
\begin{pmatrix}
    t & b \\
    0 & 1
\end{pmatrix}
\text{or} 
\begin{pmatrix}
    t^{-1} & 0 \\
    0 & 1
\end{pmatrix}
$ modulo $KZ$ with $b \in k$. 

\begin{figure}[htb]
\centering
\includegraphics[width=4cm, height=4cm]{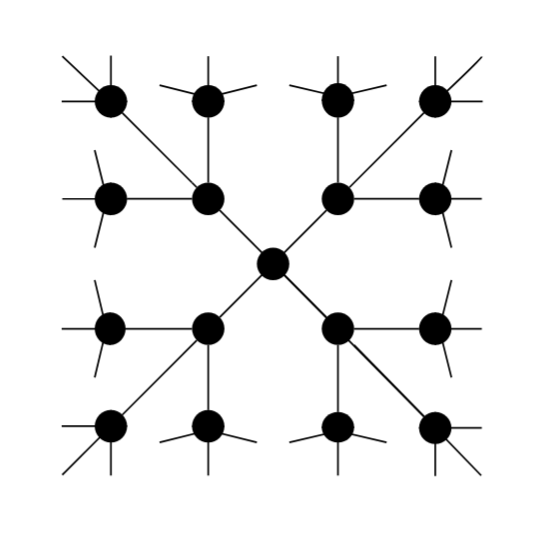}
\caption{The Tree $\mathcal{T}$ for $q=3$.}
\label{tree}
\end{figure}

This is the Bruhat-Tits building for ${\rm{PGL}_2}(F_{\infty})$. As stated earlier, it is the $(q+1)$-regular infinite tree and plays a similar role as $\mathcal{H}$. Furthermore, it is a simplicial complex. Fixing a vertex $v$, any infinite non-backtracking path starting at $v$ is an \textit{end}, which correspond to $\mathbb{P}^1(F_{\infty})$, and the ends corresponding to $\mathbb{P}^1(F)$ is a \textit{rational end}. Now $\overline\Gamma$ acts on $\mathcal{T}$, giving us the quotient space $\overline\Gamma\backslash\mathcal{T}$. Takahashi computes the quotient space using the stabilizers of the vertices and shows that $\overline\Gamma\backslash\mathcal{T}$ can be identified with a subtree $\mathcal{S}$ of $\mathcal{T}$. 

\subsection{The Subtree $\mathcal{S}$}
We briefly describe the construction of $\mathcal{S}$. (See \cite[Section 2]{takahashi} for more details). Let $f(x,y) = 0$ define an elliptic curve $E$ over $k$. We let $o$ denote the vertex on $\mathcal{T}$ corresponding to the coset 
$
\begin{pmatrix}
    t & t^{-1} \\
    0 & 1
\end{pmatrix}
KZ$. We also call this vertex type $o$. The adjacent vertices are given the following labels: $v(l)$ for all $l \in \mathbb{P}^1(k)$. The adjacent vertices of the $v(l)$ depend on the value of $l$. 

First, if $l = \infty$ or $x = l \in \mathbb{F}_q$ yields a single solution $y = m$ in the equation $f(x,y)=0$, there is an end with the labels $c(p,n)$ for all $n \in \mathbb{N}_{\geq 1}$ starting at $v(l)$ where $p = \infty$ or $p = (l,m)$. At $n = 1$, there is one other adjacent vertex labeled $e(p)$, which we call type $e$. We call the vertex $v(l)$ type $os$ since there is one solution to $f$. 

If $x = l' \in \mathbb{F}_q$ yields no solution of $y$ to $f(x,y) = 0$, then $v(l')$ has no adjacent vertices other than $o$. We call such $v(l')$ vertices type $ns$ since there is no solution to $f$.

Finally, if $x = l \in \mathbb{F}_q$ yields two solutions of $y = m$ and $y = m'$ under $f(x,y)=0$, then we have two ends with labels $c(p,n)$ and $c(p',n)$ for all $n \in \mathbb{N}_{\geq 1}$ adjacent to $v(l)$ where $p = (l,m)$ and $p=(l'm)$. We call such $v(x)$ vertices type $s$ for square. This is because if $a_1 = a_3 = 0$, then $f(x,y) = y^2 - g(x)$ and $g(l)$ is a square in $\mathbb{F}_q$.

\begin{figure}[htb]
\centering
\includegraphics[width=8cm, height=7cm]{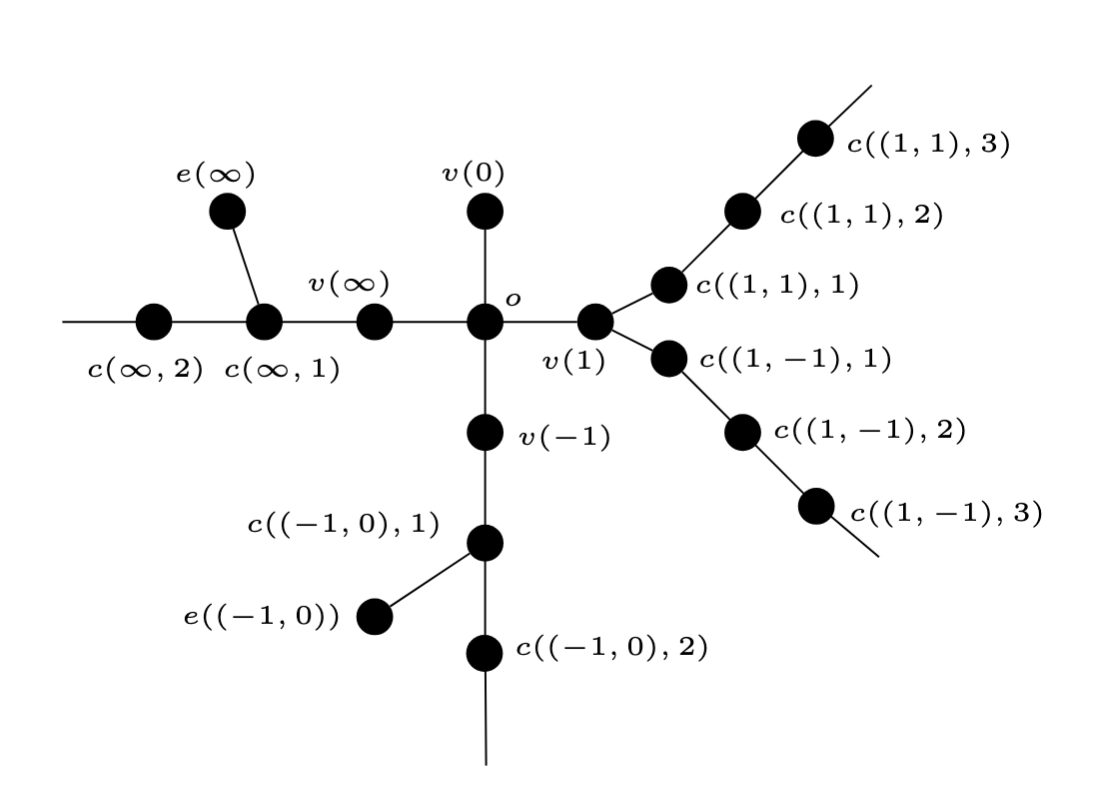}
\caption{The Subtree $\mathcal{S}$ for $y^2=x^3+x-1$ over $\mathbb{F}_3$.}
\label{fig: quot_tree}
\end{figure}

The main result of \cite{takahashi} is the following.
\begin{theorem}\textup{\cite[Theorem 3]{takahashi}}
    For any elliptic curve $E$ over $k$, we have $\mathcal{S} \cong \overline\Gamma\backslash\mathcal{T}$.
\end{theorem}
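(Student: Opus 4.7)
The plan is to carry out reduction theory for $\Gamma = \mathrm{GL}_2(\mathcal{O})$ acting on $\mathcal{T} = G/KZ$ and read off the quotient graph. Using the Iwasawa decomposition $G = BK$, I would represent every vertex by an upper-triangular matrix $\begin{pmatrix} t^n & b \\ 0 & 1 \end{pmatrix} KZ$ with $n \in \mathbb{Z}$ and $b \in F_\infty$, so that each vertex is determined by a pair $(n, [b])$ with $[b]$ taken modulo $t^n \mathcal{O}_\infty$ on the right and modulo the Borel part of $\Gamma$ on the left. The remaining involutions in $\Gamma \setminus B$ can swap ``incoming'' and ``outgoing'' branches at a vertex and must be tracked separately.

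I would handle the unbounded rays of $\Gamma \backslash \mathcal{T}$ first. As $n$ grows, the quotient $F_\infty / (\mathcal{O} + t^n \mathcal{O}_\infty)$ stabilizes to the ideal class group of $\mathcal{O}$, which for an elliptic curve is isomorphic to $\mathrm{Pic}^0(E)(k) \cong E(k)$. This produces exactly one cusp per rational point of $E$: the identity class yields the branch $c(\infty, n)$ attached to $v(\infty)$, and each affine $P = (x_0, y_0) \in E(k)$ yields the branch $c((x_0, y_0), n)$. Together these exhaust the ends of $\Gamma \backslash \mathcal{T}$, and the counts match those in the description of $\mathcal{S}$.

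For the compact part, I would start from the base vertex $o$, whose stabilizer surjects onto $\mathrm{GL}_2(k) \cdot Z$ and hence acts on the $q+1$ neighbors of $o$ with orbits parameterized by $\mathbb{P}^1(k)$; these give the vertices $v(x)$. At each $v(x)$ I would explicitly compute $\mathrm{Stab}_\Gamma(v(x))$ and count its orbits on the $q$ non-returning neighbors, using the elliptic curve equation $f(x_0, Y) = 0$ to detect when $k$-rational lifts of $y$ exist. The number of such lifts ($0$, $1$, or $2$) selects the type $ns$, $os$, or $s$ and determines the number of additional orbits ($0$, $2$, or $2$): the $c((x,y), 1)$-orbits account for the cusp rays emanating from $v(x)$, while in the $os$ case one extra orbit produces the leaf $e((x,y))$ coming from the ramification of the degree-$2$ projection $E \to \mathbb{P}^1$, $(x,y) \mapsto x$. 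Finally I would verify adjacency by checking that distinct $\mathrm{Stab}_\Gamma(v(x))$-orbits produce distinct edges, so the natural injection $\mathcal{S} \hookrightarrow \mathcal{T}$ descends to a graph isomorphism.

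The main obstacle is precisely this local orbit count at $v(x)$: one must pin down $\mathrm{Stab}_\Gamma(v(x))$, show that its action on the $q$ forward neighbors partitions them into the predicted orbits with sizes summing to $q$ in each of the three cases, and then check that the resulting adjacencies glue with the ray analysis above. Once this local picture matches the global structure of $\mathcal{S}$, the theorem follows.
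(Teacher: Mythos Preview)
The paper does not prove this statement: it is quoted verbatim from Takahashi and used as a black box, with only a brief description of the subtree $\mathcal{S}$ preceding it. There is therefore no in-paper argument to compare your proposal against; the closest the paper comes is citing Takahashi's Theorem~4 and Proposition~9, which record the stabilizers $\Gamma_v$ of the vertices of $\mathcal{S}$ and the nesting $\Gamma_v \subset \Gamma_w$ for successors $w$. Your overall strategy --- reduce to upper-triangular representatives, identify the ends with $\mathrm{Pic}(\mathcal{O})\cong E(k)$, and then build the finite part by computing $\mathrm{Stab}_\Gamma(v)$ and its orbits on neighbors --- is exactly the shape of Takahashi's argument, so in spirit you are reproducing the cited proof rather than offering an alternative.

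One concrete slip to flag: you assert that the stabilizer of the base vertex $o$ surjects onto $\mathrm{GL}_2(k)\cdot Z$. If that were true, $\mathrm{GL}_2(k)$ would act transitively on the $q+1$ neighbors (they are naturally indexed by $\mathbb{P}^1(k)$), and $o$ would have a \emph{single} neighbor in the quotient, contradicting the structure of $\mathcal{S}$, where all $q+1$ vertices $v(x)$ are distinct. In the vector-bundle picture $o$ has invariant $N(E)=-1$ and corresponds to a stable rank-$2$ bundle on $E$, whose automorphism group is just scalars; hence $\mathrm{Stab}_\Gamma(o)$ is small and fixes each neighbor individually. This is why the $q+1$ branches $v(x)$ survive separately in $\Gamma\backslash\mathcal{T}$. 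Once you correct this, the rest of your orbit-counting plan at $v(x)$ (keyed to the number of $k$-solutions of $f(x_0,Y)=0$) lines up with Takahashi's case analysis.
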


An infinite path with labels $c(p,n)_{n \geq 1}$ is called a \textit{rational end}. Such paths are in bijection with $E(\mathbb{F}_q)$, the set of rational points on $E$ according to \cite[pg 86-87]{takahashi}. We will prove that the rational ends correspond to a \textit{cusp}, which corresponds to a point in $\mathbb{P}^1(F)$.
\begin{lemma}\label{lem: end cusp s}
A rational end on $\mathcal{S}$ goes to a point $\alpha$ in $\mathbb{P}^1(F)$, which we call a cusp.
\end{lemma}

\begin{proof}
Let $p = (l,m)$ be a rational point on $E$ that is not the point $\infty$, and let $c(p,n)_{n \geq 1}$ denote the rational end corresponding with $p$. We know that $\alpha = (y-m)/(x-l) \in k(E)$ has a Laurent series representation $k(t) \in k_{\infty}$. Let $k_n(t)$ be the Laurent polynomial of degree $n-1$.
Using Proposition 1 and the description of $c(p,n)$ given on page 89 of \cite{takahashi}, the vertex $c(p,n)$ can be represented by the coset $\begin{pmatrix}
t^n & k_n(t) \\
0 & 1
\end{pmatrix}KZ$. We follow the notation of \cite{takahashi} and use $\phi(t^n, k_n(t))$. Every vertex can be written so that the coset representative has the vector $(0, 1)$ for the bottom row. Taking the limit as $n$ goes to $\infty$ gives us $\phi(0, k(t))$. So the rational end goes to the cusp $\alpha$ represented by the Laurent series $k(t)$ that lies on the boundary of $\mathcal{T}$.
The vertex $c(\infty, n)$ can be represented by $\phi(t^{-n}, 0)$. Taking the limit as $n$ goes to $\infty$, this rational end must go to the cusp $\infty$. 
\end{proof}

\begin{definition}
Let $R$ denote the set $\{\infty, (y-m)/(x-l) \in \mathbb{P}^1(F) : (l,m) \in E(\mathbb{F}_q)\}$. We call the elements in $R$ \textbf{orbit-representative cusps}.
\end{definition}
\begin{remark}
\normalfont The rational ends corresponding to these cusps on $\mathcal{S}$ are not $\overline\Gamma$-equivalent. By Lemma \ref{lem: end cusp s}, it follows that the cusps are also not $\overline\Gamma$-equivalent.
\end{remark}
\subsection{Labeling the Vertices of $\mathcal{T}$}
As in the case of the rational function field \cite[Example 2.4.1]{serre}, we extend the labeling on the vertices of $\mathcal{S}$ to the vertices of
$\mathcal{T}$. This is important because the labels will allow us to identify modular symbols. Let $pr: \mathcal{T} \to \overline\Gamma\backslash\mathcal{T} \cong \mathcal{S}$ be the canonical quotient map. Then the label of $v \in \mathcal{T}$ is defined to be the label of $pr(v)$. We discuss this in further detail using results from \cite{takahashi}.

Define a successor relation on the vertices of $\mathcal{S}$ in the following way. If we have an end $v_0, \dots , v_n, \dots, v_m, \dots $ with $v_0 = o, v_n = v, v_m = w$ for $0\leq n<m$, then we call $w$ the successor of $v$ and $v$ the predecessor of $w$. We use the following result. 
\begin{theorem}\label{stab_in_quot_tree}\textup{\cite[Theorem 4]{takahashi}} 
Given a vertex $v$, denote $\overline\Gamma_v \subset \overline\Gamma$ as the stabilizer of $v$. Every vertex $v$ in $\mathcal{S}$ satisfies the following two properties: 
\begin{itemize}
    \item For any successor $w$ of $v$ in $\mathcal{S}$, we have $\overline\Gamma_v \subset \overline\Gamma_w$. 
    \item For any successor $w$ of $v$ in $\mathcal{T}$ but not in $\mathcal{S}$, $w$ is $\overline\Gamma_v$-equivalent to the predecessor of $v$ in $\mathcal{S}$.
\end{itemize}
\end{theorem}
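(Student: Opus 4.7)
The plan is to verify both claims by working with explicit matrix representatives of the vertices of $\mathcal{S}$ and computing their stabilizers as subgroups of $\Gamma$. For a vertex $v$ represented by the coset $g_v KZ$, the stabilizer is $\Gamma_v = \Gamma \cap g_v (KZ) g_v^{-1}$, so the whole computation reduces to matrix algebra over $\mathcal{O}_{\infty} = k[[t]]$ intersected with $\mathcal{O} = k[E]$. The vertex labeling of $\mathcal{S}$ already encodes a canonical choice of $g_v$ and makes the computation systematic.

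For property 1, I would proceed by induction along a chain of successors and exploit the edge description given in the definition of $\mathcal{T}$. If $w$ is a successor of $v$ in $\mathcal{S}$, then $g_w \equiv g_v h$ modulo $KZ$ for one of the two prescribed edge matrices $h$. An element $\gamma \in \Gamma$ stabilizes $v$ if and only if $g_v^{-1} \gamma g_v \in KZ$, so the inclusion $\Gamma_v \subset \Gamma_w$ amounts to showing that the conjugate $h^{-1}(g_v^{-1}\gamma g_v) h$ also lies in $KZ$. I would verify this by writing a general stabilizing matrix entry-by-entry over $\mathcal{O}_{\infty}$ and checking the integrality after conjugation by $h$. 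Structurally, moving from $v$ to its successor in $\mathcal{S}$ relaxes the integrality condition on exactly one off-diagonal position in the direction of the end, which is the reason $\Gamma_v$ fits inside $\Gamma_w$.

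For property 2, I would use an orbit-counting argument. The group $\Gamma_v$ acts on the $q+1$ neighbors of $v$ in $\mathcal{T}$, and since $\mathcal{S} \cong \Gamma \backslash \mathcal{T}$ is a strict fundamental domain, the quotient map identifies the orbits of $\Gamma_v$ on these neighbors with the neighbors of $v$ in $\mathcal{S}$, one representative per orbit. By property 1, every successor of $v$ lying in $\mathcal{S}$ is already fixed by the whole of $\Gamma_v$, so its $\Gamma_v$-orbit is a singleton. Consequently, all the other neighbors of $v$ in $\mathcal{T}$, namely the predecessor together with the successors in $\mathcal{T} \setminus \mathcal{S}$, must coalesce into the unique remaining orbit. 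This is precisely the claimed $\Gamma_v$-equivalence.

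The main obstacle I anticipate is the case analysis for the three vertex types ($ns$, $os$, $s$) and for the vertices $c((x,y),n)$ along ends. In particular, the $os$-case introduces the extra neighbor $e((x,y))$ in $\mathcal{S}$, which must be separately checked to be fixed by $\Gamma_v$ rather than lumped into the predecessor orbit, and the successors $c((x,y),n)$ require careful tracking of how the integrality conditions on matrix entries evolve with $n$. Exhibiting explicit generators of $\Gamma_v$ that witness the orbit coalescence in property 2 for each case is where the bulk of the work lies, and this is essentially the content of Takahashi's detailed stabilizer computation.
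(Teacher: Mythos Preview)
The paper does not supply a proof of this statement; it is cited from Takahashi without argument. Your outline for property~1, reducing the inclusion $\Gamma_v\subset\Gamma_w$ to a conjugation computation over $\mathcal{O}_\infty$ for each edge type, is in line with Takahashi's explicit stabilizer calculations.

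For property~2, your orbit-counting shortcut departs from Takahashi's route. You invoke the fundamental-domain identification $\mathcal{S}\cong\Gamma\backslash\mathcal{T}$ (the paper's Theorem~2.1, Takahashi's Theorem~3) to conclude that the $\Gamma_v$-orbits on the neighbors of $v$ biject with the neighbors of $v$ in $\mathcal{S}$, and then argue by elimination. In Takahashi's development the logical dependency is reversed: properties~1 and~2 are established first by direct computation---for each vertex type one writes down explicit elements of $\Gamma_v$ and checks where they send the $q+1$ neighbors of $v$---and the fundamental-domain statement is then deduced from these two properties via a general criterion from Serre's theory of groups acting on trees. Within the present paper's ordering your argument is not formally circular, since Theorem~2.1 is stated before Theorem~2.2, but it is not a reconstruction of Takahashi's proof; it only shows that the two cited results are consistent with one another. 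As you correctly note in your final paragraph, the substantive content is the explicit case-by-case stabilizer computation, which your proposal describes in outline but does not carry out.
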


Theorem \ref{stab_in_quot_tree} tells us that we can label all the vertices in $\mathcal{T}$ in the following way. For any vertex $v$, the adjacent vertices that were not labeled on $\mathcal{S}$ are given the same label as the predecessor of $v$. The adjacent vertices of $o$ are given by $v(l)$. For $v(l)$, all remaining adjacent vertices are labeled $o$. For $c((l,m),1)$ where $x = l$ yields one solution $y = m$ to $f(x,y) = 0$, the adjacent vertices are $e(p)$, $c(p,2)$ where $p = (l,m)$ and $q-1$ vertices labeled $v(l)$. When $l$ has two solutions $m, m'$ to $f(x,y) = 0$, the adjacent vertices are $c(p,2)$, $c(p',2)$ where $p = (l, m)$ and $p' = (l,m')$ and $q-1$ vertices labeled $v(l)$. For any $c(p,n)$ for $n \geq 2$, we have one vertex labeled $c(p,n+1)$ and the remaining are $c(p,n-1)$. For $e(p)$, all adjacent vertices are labeled $c(p,1)$. (See Figure \ref{fig: tree all labels} for an example).

\begin{figure}[htb]
\centering
\includegraphics[width=10.5cm, height=7.5cm]{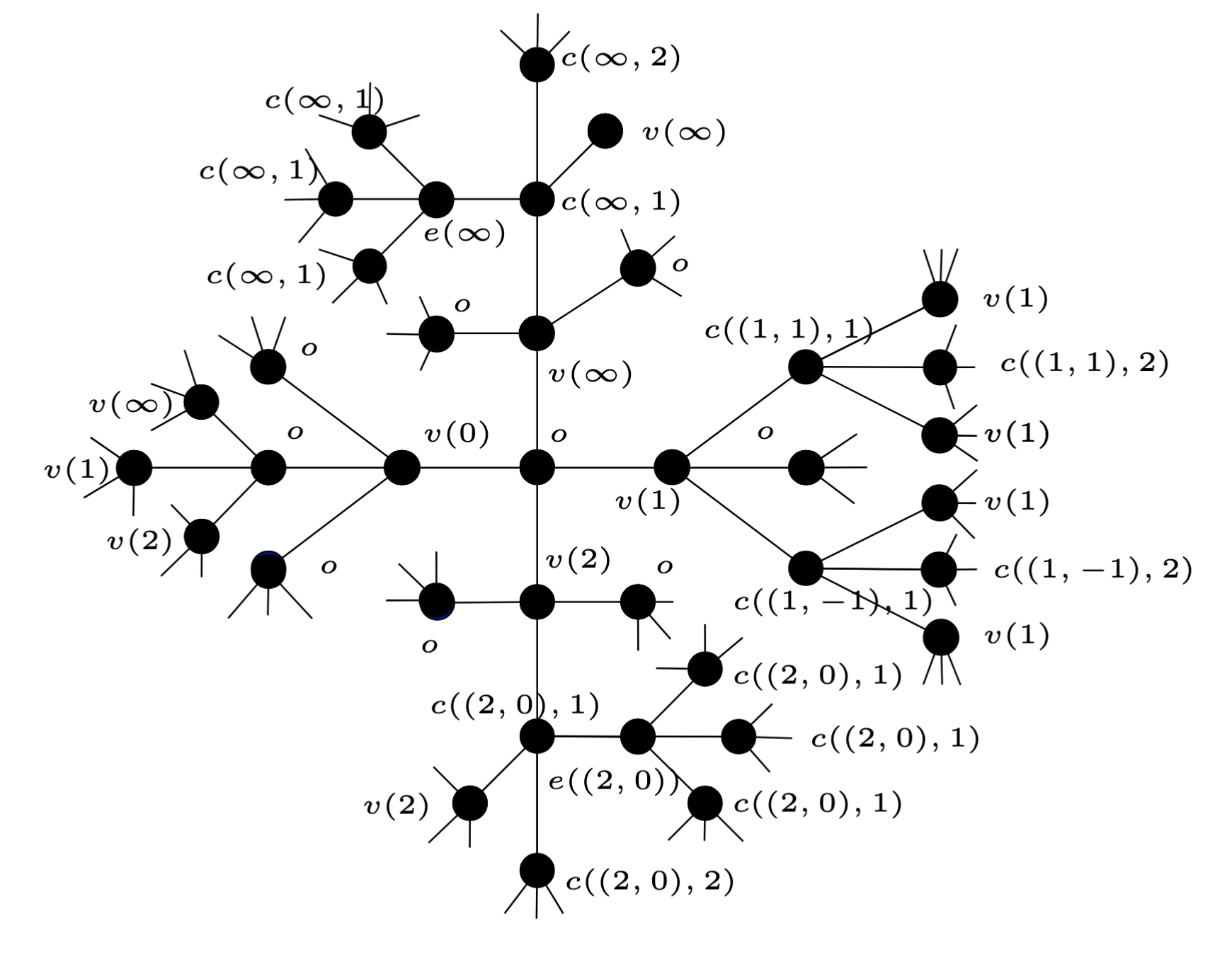}
\caption{An example of the labeling for finitely many vertices of $\mathcal{T}$ for $y^2=x^3+x-1$ over $\mathbb{F}_3$.}
\label{fig: tree all labels}
\end{figure}

An end corresponding to $v_1, v_2,\dots$ is rational if for some $M \in \mathbb{N}$, the sequence of vertices $(v_m)_{m \geq M}$ is bijective to the sequence of labels $c((x,y),n)_{n \geq N}$ for some $N \in \mathbb{N}$. We prove the following result.
\begin{lemma}
A rational end on $\mathcal{T}$ goes to a cusp $\alpha$ in $\mathbb{P}^1(F)$.
\end{lemma}
\begin{proof}
Lemma \ref{lem: end cusp s} covered the special case of rational ends on $\mathcal{S}$. A rational end $c((x,y),n)$ on $\mathcal{T}$ is $\overline\Gamma$-equivalent to a rational end on $\mathcal{S}$ which corresponds to an orbit-representative cusp. This means, for any cusp $\alpha \in k(E)$, there is a $\gamma \in \overline\Gamma$ such that $\alpha = \gamma \cdot \beta$ where the action is given by a fractional linear transformation and $\beta$ is an orbit-representative cusp. 
\end{proof}

\subsection{Defining Modular Symbols}
We define the group of our modular symbols using a similar definition to one given in \cite[Chapter 3]{stein}.
\begin{definition}\label{def: mod sym}
        The \textbf{group of modular symbols}, denoted $\mathbb{M}_2$, is the $\mathbb{Q}$-vector space over the set of symbols $[\alpha, \beta]$ where $\alpha, \beta \in \mathbb{P}^1(F)$ with the following two relations:
        \[
        [\alpha, \beta] = -[\beta, \alpha] \text{ and } [\alpha, \beta] + [\beta, \gamma] + [\gamma, \alpha] = 0.
        \]
        Let $\Gamma \subseteq \overline\Gamma$ be a congruence subgroup. The \textbf{group of modular symbols over $\Gamma$}, denoted $\mathbb{M}_2(\Gamma)$, is the quotient of $\mathbb{M}_2$ by the submodule generated by $[\alpha, \beta] - \gamma \cdot [\alpha, \beta]$ for $\gamma \in \Gamma$. The action $\gamma \cdot [\alpha, \beta]$ is given by $[\gamma \cdot \alpha, \gamma \cdot \beta]$ and the action on the cusps is by fractional linear transformations.
\end{definition}
On $\mathcal{T}$, a modular symbol $[\alpha, \beta]$ can be identified with a unique non-backtracking path from the cusp $\alpha$ to the cusp $\beta$. We say that the symbol $[\alpha, \beta]$ \textit{exits} the cusp $\alpha$ and \textit{enters} the cusp $\beta$. On $\mathcal{T}$, the sum of two modular symbols $[\alpha, \beta] + [\beta, \gamma]$ is the unique non-backtracking path $[\alpha, \gamma]$. 

\subsection{Reduced Symbols: A Combinatorial Approach}
In the next two sections, we define the generators. We first give a combinatorial construction and then an algebraic approach suitable for computations. 

The adjacent vertices of any $e(p)$-vertex are labeled $c(p,1)$. Choose one $c(p,1)$-vertex. The limit of the sequence $(c(p,n))_{n \geq 1}$, which is a rational end, as $n$ tends to infinity, is a cusp $\alpha$. For a different $c(p,1)$-vertex, the rational end $(c(p,n))_{n \geq 1}$ corresponds to a different cusp $\beta$. Consider the path identified by $[\alpha, \beta]$. The labels of the vertices of this path is the double-sided sequence:
\[\dots, c(p, 2), c(p, 1), e(p), c(p,1), c(p,2), \dots\]

For a vertex $v$ of type $s$, $o$, and $ns$, there are at least two distinct vertices with labels $c(p,1)$ and $c(p',1)$ of degree at most three from $v$. These two vertices give rational ends $(c(p,n))_{n \geq 1}$ and $(c(p',n))_{n \geq 1}$ corresponding to cusps $\alpha$ and $\beta$. In each case, the modular symbol $[\alpha, \beta]$ corresponds to a path on $\mathcal{T}$ with a specific double-sided infinite sequence of vertex labels given below. 
(Figures \ref{fig: uni_symbols} and \ref{fig: sym_on_tree} give visual representations).

\begin{itemize}
    \item The path given by the double-sided sequence with $p = \infty$ or $p = (l,m)$ where $x = l$ has a unique solution to $y = m$ to $f(x,y) = 0$. We call such symbols \textit{type $e$} since the cusps were found starting at the $e$-vertex.
    \begin{equation}
        \dots, c((x,y), 2), c((x,y), 1), e((x,y)), c((x,y),1), c((x,y),2),\dots \label{type e}
    \end{equation}
    \item The path given by the double-sided sequence with $p = (l,m)$ and $p' = (l,m')$ where $x = l$ gives two solutions of $y$ to $f(x,y) = 0$. These symbols we call \textit{type $s$} since the cusps were found starting at the $s$-vertex.
    \begin{equation}
        \dots, c(p,2), c(p,1), v(l), c(p',1), c(p',2),\dots \label{type s}
    \end{equation}
    \item The path given by the double-sided sequence with $p = (l,m)$ and $p' = (l',m')$ where $p, p'$ are any two rational points in $E(\mathbb{F}_q)$. These symbols we call \textit{type $o$} since the cusps were found starting at the $o$-vertex.
    \begin{equation}
        \dots, c(p, 2), c(p, 1), v(l), o, v(l'), c(p',1), c(p',2),\dots \label{type o}
    \end{equation}
    \item The path given by the double-sided sequence with $p = (l,m)$ and $p' = (l',m')$ where $p, p'$ are any two rational points in $E(\mathbb{F}_q)$ and $x = l'' \in \mathbb{F}_q$ yields no solution of $y$ to $f(x,y) = 0$. These symbols we call \textit{type $ns$} since the cusps were found starting at the $ns$-vertex.
    \begin{equation}
        \dots, c(p, 2), c(p, 1), v(l), o, v(l''), o, v(l'), c(p',1), c(p',2),\dots \label{type ns}
    \end{equation}  
\end{itemize}

\begin{figure}[htb]
\centering
\includegraphics[width=9cm, height = 5cm]{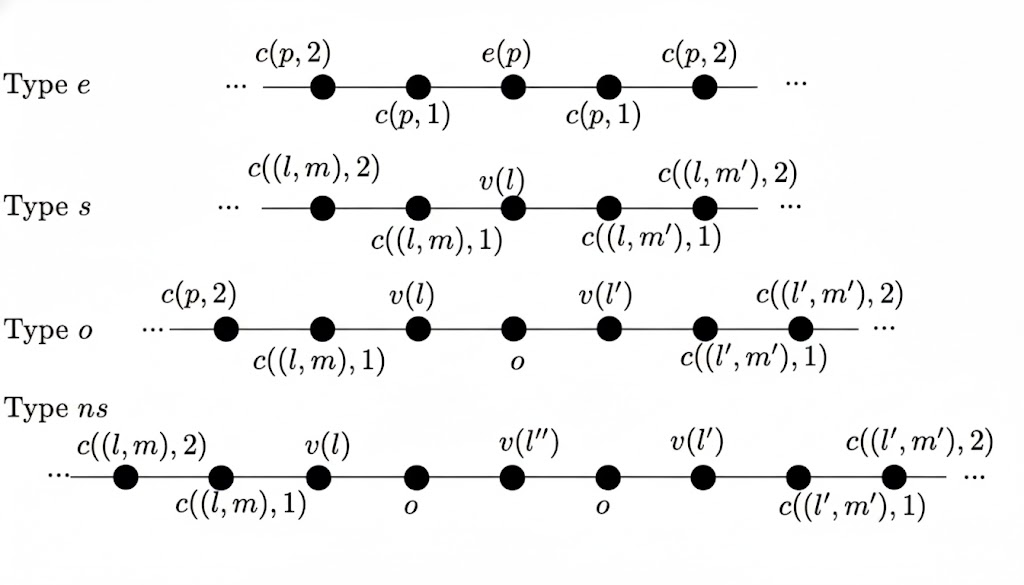}
\caption{The sequences of the four reduced symbols.}
\label{fig: uni_symbols}
\end{figure}

\begin{figure}[htb]
\centering
\includegraphics[width=8cm, height=7.5cm]{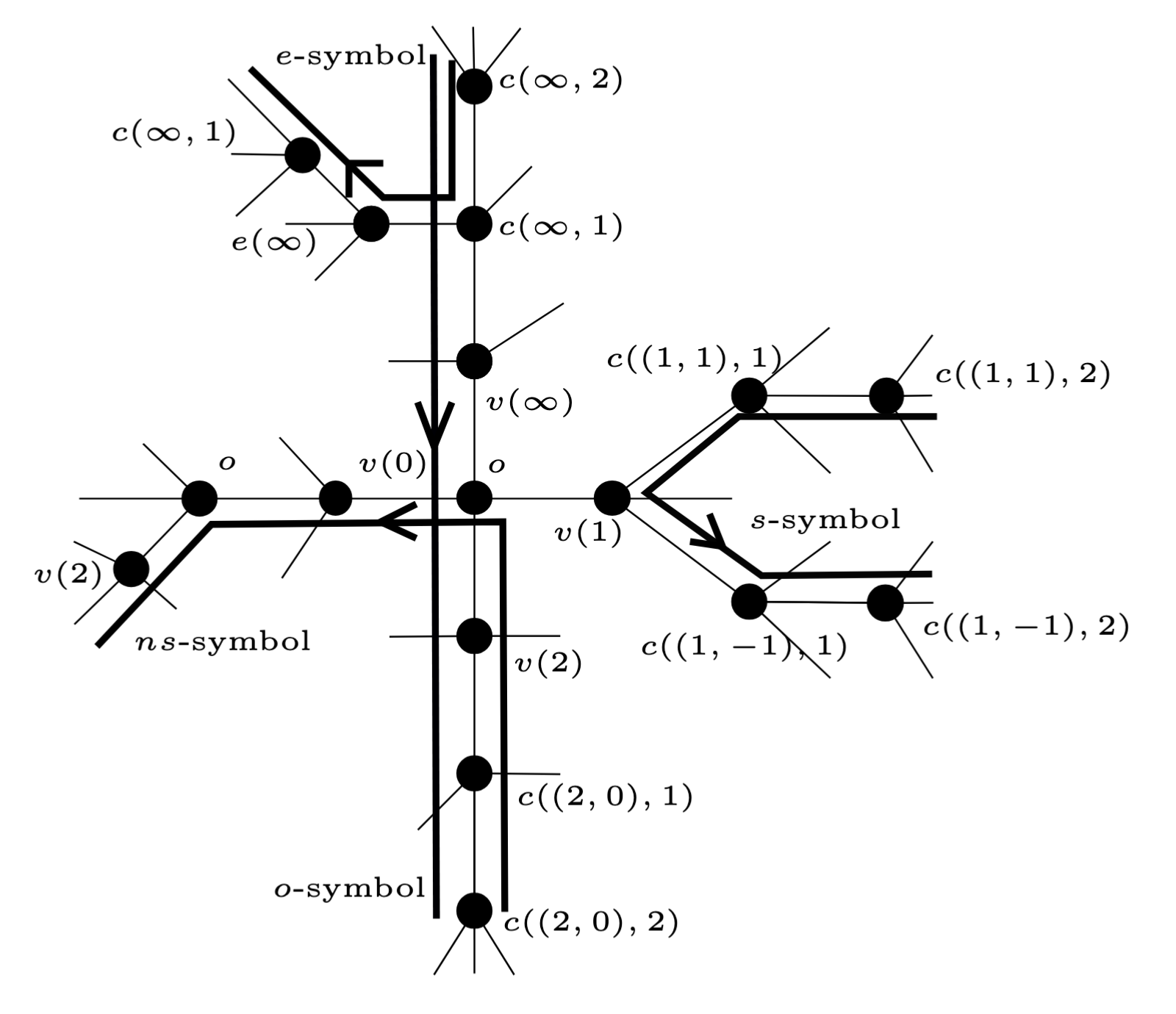}
\caption{An example of the four reduced symbols on $\mathcal{T}$ for $y^2=x^3+x-1$ over $\mathbb{F}_3$.}
\label{fig: sym_on_tree}
\end{figure}

\begin{definition}\label{reducedsymbol}
    The modular symbols that have the double-sided sequences of vertex labels given by (\ref{type e}), (\ref{type s}), (\ref{type o}), (\ref{type ns}) are called \textbf{reduced symbols}.
\end{definition}
\begin{definition}
The vertices of type $e,s,o$ and $ns$ are called \textbf{minimal vertices}, and we call the set of cusps whose pairs form a reduced symbol over a (unique) minimal vertex $v$ \textbf{the cusps attached to $v$}. 
\end{definition}
The minimal vertices are what distinguish the different types of reduced symbols. 
The double-sided sequences of the vertex labels corresponding to the four reduced symbols are unique. On $\mathcal{T}$, we can find infinitely many non-backtracking paths of these four types. However, we will show in the next section that these symbols are $\overline\Gamma$-translates of finitely many representatives, allowing for explicit computations.

\subsection{Reduced Symbols: An Algebraic Approach}
Each reduced symbol has a distinguishing minimal vertex. The stabilizer group of the minimal vertices on $\mathcal{S}$ can be found in \cite[Proposition 9]{takahashi}. Recall that we have the set of orbit-representative cusps on $\mathcal{S}$. Let $v$ be a minimal vertex with stabilizer group $\overline\Gamma_v \subset \overline\Gamma$ and let $\alpha$ be an orbit-representative cusp. Computing the action of elements of $\overline\Gamma_v$ on $\alpha$ will give us reduced symbols. 

Let $R$ be the set of orbit-representative cusps. The stabilizer groups of the minimal vertices can be explicitly computed using \cite[Proposition 6 and 9]{takahashi}. Furthermore, \cite[Theorem 5]{takahashi} tells us what finite groups the stabilizer groups are up to isomorphism.

Every elliptic curve has the rational point $\infty$. The stabilizer group of the vertex $e(\infty)$ on $\mathcal{S}$ is ${\rm{GL}_2}(\mathbb{F}_q)$. The group acts transitively on $\mathbb{P}^1(\mathbb{F}_q)$, which can be identified with the edges of $e(\infty)$ and hence the cusps attached to $e(\infty)$. The cusp $\infty$ is the only cusp in $R$ that can be used to find $e(\infty)$-symbols. The matrix $\begin{pmatrix}
0 & 1 \\
-1 & 0 
\end{pmatrix} \in {\rm{GL}_2}(\mathbb{F}_q)$ acts on $\infty$ to give us $0$. A reduced symbol of type $e(\infty)$ is the pair $[\infty, 0]$.

Let $p = (l,m)$ be a rational point on $E$ such that $x = l$ yields exactly one solution $y = m$ to $f(x,y) = 0$. The orbit-representative cusp corresponding to $p$ is $(y-m)/(x-l)$. The stabilizer group of the vertex $e(p)$ on $\mathcal{S}$ is isomorphic to ${\rm{GL}_2}(\mathbb{F}_q)$, which again acts transitively on the edges. The map is given by sending $x$ to $l$ and $y$ to $m$. Let $S_p \in \overline\Gamma_{e(p)}$ be the element that maps to 
$\begin{pmatrix}
0 & 1 \\
-1 & 0 
\end{pmatrix} \in {\rm{GL}_2}(\mathbb{F}_q)$. Applying $S_p$ to $(y-m)/(x-l)$, one reduced symbol of type $e(p)$ is the pair $[(y-m)/(x-l), S_p \cdot (y-m)/(x-l)]$.

Any ordered pair of cusps in $R$ form an $o$ or $s$-symbol. The stabilizer group of $o$ is the set of scalar matrices in ${\rm{GL}_2}(\mathbb{F}_q)$, which fixes every cusp. The stabilizer group of any $s$-vertex fixes the two cusps that form the $s$-symbol. This is because the cusps in $R$ are the $\overline{\Gamma}$-orbits of the cusps. An $o$- or $s$-symbol is any pair of cusps from $R$. Note that the $s$-symbols comes from the minimal vertices with label $v(l)$ where $x = l$ yields two solutions of $y$ in $ff(x,y) = 0$. 

Finally, \cite[Theorem 5]{takahashi} tells us that the stabilizer group of an $ns$-vertex $v(l')$ is isomorphic to $\mathbb{F}_q(\theta)^\times$, where $\mathbb{F}_q(\theta)$ is the quadratic extension of $\mathbb{F}_q$ containing $\theta \in \overline{\mathbb{F}_q}$ such that $f(l', \theta) = 0$. The group $\overline\Gamma_{v(l')}$ is cyclic. Let $P_{l'}$ be the generator and $R_{l'}^i = \{P_{l'}^i \cdot \alpha : \alpha \in R\}$ for $1 \leq i \leq q$. (We drop the subscript if it is clear which $ns$-vertex $v(l')$ we are referring to). On the $q+1$ edges of $v(l')$, the action of $P_{l'}$ is equivalent to the action of the $(q+1)$-cycle $(1 2 ... q+1)$ on the set $\{1, 2, ..., q+1\}$. This means that $P_{l'}^k$ sends $\alpha \in R_{l'}^i$ to the cusp that is $\overline\Gamma$-equivalent to $\alpha$ in $R_{l'}^{i'}$ where $i' < q+1$ such that $i' \equiv i+k \mod q+1$.

Using the action of the stabilizer groups on the minimal vertices and the orbit-representative cusps, we get the following result.
\begin{proposition}\label{thm: explicit red sym}
The reduced symbols over $\overline\Gamma$ can be represented as follows: 
\begin{itemize}
	\item The $e(\infty)$-symbols are $\overline\Gamma$-translates of $[\infty, 0]$,
	\item The $e(p)$-symbols are $\overline\Gamma$-translates of $[(y-m)/(x-l), S_p \cdot (y-m)/(x-l)]$,
	\item The $o$- and $s$-symbols are $\overline\Gamma$-translates of $[\alpha, \beta]$ where $\alpha, \beta \in R$,
	\item The $ns$-symbols are $\overline\Gamma$-translates of $[\alpha, \beta]$ where $\alpha \in R$, $\beta \in R_{l'}^i$, and $v(l')$ is an $ns$-vertex.
	\end{itemize}
\end{proposition}

\begin{remark}
The vertices of type $s$ or $ns$ have label $v(l)$ for $l \in \mathbb{F}_q$. If there is not a unique $s$- or $ns$-vertex on $\mathcal{S}$, it is better to distinguish the corresponding symbols by labeling them $v(l)$-symbols while keeping track of their type.
\end{remark}

\begin{remark}
    \normalfont The four reduced symbols serve as the analog to the ``basic" modular symbols over the rational function field $\mathbb{F}_q(T)$ \cite[pg 284]{teitelbaum}. Over $\mathbb{F}_q(T)$, there is one type of ``basic" modular symbols. The ``basic" symbols are the ${\rm{GL}_2}(\mathbb{F}_q[T])$-translates of $[\infty, 0]$, which correspond to our $e(\infty)$-symbols.
\end{remark}
\begin{example}\label{ex: example red sym}
	\normalfont We compute the reduced symbols over the elliptic curve $E$ given by equation $y^2=x^3+x-1$ over $\mathbb{F}_3$. The set of orbit-representative cusps are given by $R = \{\infty, y/(x+1), (y-1)/(x-1), (y+1)/(x-1)\}$. 

The isomorphism from $\overline\Gamma_{e((-1,0))}$ to ${\rm{GL}_2}(\mathbb{F}_3)$ is given by sending $x$ to $-1$ and $y$ to $0$. The cusp $y/(x+1)$ is the orbit-representative cusp used to find $e((-1,0))$-symbols. The other cusp we find using Proposition \ref{thm: explicit red sym} above is $(x^2-x+1)/y$. 

Note that any ordered pair of cusps in $R$ form an $o$ or $s$-symbol, corresponding to the vertex $v(1)$. The stabilizer group of $o$ is the set of scalar matrices in ${\rm{GL}_2}(\mathbb{F}_3)$, which fixes every cusp. The stabilizer group of $v(1)$ fixes the cusps $(y+1)/(x-1)$ and $(y-1)/(x-1)$. 

The stabilizer group of $v(0)$, the $ns$-vertex, is isomorphic to $\mathbb{F}_3(\sqrt{-1})^\times$. It has generator 
$P = \begin{pmatrix}
	y+1 & -x^2-1 \\
        x & -y+1
\end{pmatrix}$. Let $R^i$ be as defined in Proposition \ref{thm: explicit red sym}, which are shown below. The cusps are listed in the sets $R^i$ below by their $\overline\Gamma$-orbits. For example, the first cusps listed in each $R^i$ below: $\infty, (y+1)/x, y/x, (y-1)/x$ are $\overline\Gamma$-equivalent. 
\begin{gather*}
R = R^0 = \{\infty, y/(x+1), (y-1)/(x-1), (y+1)/(x-1)\} \\
R^1 = \left\{\dfrac{y+1}{x}, \dfrac{y-x^2+1}{-y+x+1}, \dfrac{x^2-1}{y-1}, \dfrac{-y+x^2+1}{y-x-1}\right\} \\
R^2 = \left\{\dfrac{y}{x}, \dfrac{x^2-1}{y}, \dfrac{x^2-y}{y-x}, \dfrac{x^2+y}{y+x}\right\} \\
R^3 = \left\{\dfrac{y-1}{x}, \dfrac{-y+x^2+1}{-y-x-1}, \dfrac{y+x^2+1}{y+x+1}, \dfrac{x^2-1}{y+1}\right\} 
\end{gather*}

The reduced symbols over $\overline\Gamma$ for our curve $E/\mathbb{F}_3$ are as follows:
\begin{itemize}
	\item The $e(\infty)$-symbols are $\overline\Gamma$-translates of $[\infty, 0]$,
	\item The $e((-1,0))$-symbols are $\overline\Gamma$-translates of $[y/(x+1), (x^2-x+1)/y]$,
	\item The $s$- or $v(1)$-symbols are $\overline\Gamma$-translates of $[(y-1)/(x-1), (y+1)/(x-1)]$,
	\item The $o$-symbols are $\overline\Gamma$-translates of $[\alpha, \beta]$ where $\alpha, \beta \in R$ except for the $s$-symbol given above,
	\item The $ns$ or $v(0)$-symbols are $\overline\Gamma$-translates of $[\alpha, \beta]$ where $\alpha \in R$ and $\beta \in R^i$ for $i = 1,2, 3$.
\end{itemize}
\end{example}

\subsection{Proving Theorem \ref{main 1}}
In order to prove Theorem \ref{main 1}, we must use a result from \cite{serre}. Serre shows that the vertices of $\mathcal{T}$ can be interpreted as vector bundles of rank 2 over $E$ \cite[pg 100]{serre}. Let $B$ be such a vector bundle and let $\text{det}(B)$ be the determinant of the bundle. The determinant is a line bundle and we denote the degree of $B$ (the degree of a rational section) as $\text{deg}(B)$. Let $C$ be a subbundle of rank 1. He defines the following invariant on the vertices:  
\[N(B; C) := 2\cdot \text{deg}(C) - \text{deg}(B)
\text{ and } 
N(B) := \sup_C N(B; C) \]

Let $N(E)$ denote the invariant on the vertices in our construction. Our vertices have the following invariants.
\begin{center}
\begin{tabular}{ |c|c| } 
 \hline 
 N(E) & Vertex type  \\ 
 \hline \hline
 $n \geq 1$ & $c((x,y),n)$  \\ 
 \hline
 $0$ & $os$, $s$, $e$  \\ 
 \hline
 $-1$ & $o$  \\
 \hline
 $-2$ & $ns$  \\
 \hline 
\end{tabular}
\end{center}

Recall that the vertices $e, s, o, \text{ and } ns$ are the minimal vertices. This name comes from this invariant. If a minimal vertex has invariant $n$, then at least two of the adjacent vertices have invariant $n+1$. This means that at a minimal vertex, at least two of the the adjacent vertices lead to cusps without needing to pass another minimal vertex. The other types of vertices do not have this property. A reduced symbol $[\alpha, \beta]$ is given by a double-sided sequence of vertices whose invariants strictly decrease then strictly increase. We use this to prove Theorem \ref{main 1}.

\begin{theorem}[Restatement of Theorem \ref{main 1}]\label{uni sum}
Any modular symbol $[\alpha, \beta]$ can be written as a finite sum of the reduced symbols.
\end{theorem}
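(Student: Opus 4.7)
The plan is to induct on the number of minimal vertices $N_{\min}(\mathcal{P}_{\alpha,\beta})$ on the path $\mathcal{P}_{\alpha,\beta}$. A preliminary observation is that $N_{\min}(\mathcal{P}_{\alpha,\beta})\geq 1$: every non-minimal vertex has exactly one neighbor of strictly larger invariant, so it cannot be a local invariant-minimum on a non-backtracking path, forcing the deepest vertex of $\mathcal{P}_{\alpha,\beta}$ to be minimal. The base case is when the label sequence of $\mathcal{P}_{\alpha,\beta}$ matches one of (\ref{type e})--(\ref{type ns}): then $\{\alpha,\beta\}$ is itself a reduced symbol by Definition \ref{reducedsymbol}. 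A short check using the labeling rules shows that every path with $N_{\min}=1$ matches type $e$, $s$, or $o$ (the single minimal vertex determines the labels of its neighbors, which must avoid further minimal vertices), and every path whose minimal vertices form the adjacent block $o,v(z)=ns,o$ matches type $ns$, so the base case covers all small configurations.

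For the inductive step, walk along $\mathcal{P}_{\alpha,\beta}$ from $\alpha$ and identify the first \emph{reduced prefix}: the shortest initial subpath whose label sequence matches a reduced pattern through its central minimal-vertex block. The first minimal vertex after $\alpha$'s $c$-tail is of type $e$, $s$, or $o$ (never $ns$, since $v(z)=ns$ is not adjacent to any $c$-vertex), which determines the type of the prefix; in the delicate case that this first minimal vertex $o$ is adjacent on the path to a $v(z)=ns$, the prefix is extended to include the full three-vertex block $o,ns,o$. Past the prefix, the path ascends through a $c$-chain, reaches a peak $p=c((x,y),n_0)$ for some $n_0\geq 1$, then descends toward the next minimal vertex. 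The unique larger-invariant neighbor $c((x,y),n_0+1)$ of $p$ is off the path; following it up the $c$-chain indefinitely yields a rational end at a cusp $\gamma$. By construction, $\mathcal{P}_{\alpha,\gamma}$ has label sequence exactly matching the reduced pattern of the prefix, so $\{\alpha,\gamma\}$ is a reduced symbol. The residual $\mathcal{P}_{\gamma,\beta}$ descends from $\gamma$ down to $p$ and then continues along the original path to $\beta$; its minimal-vertex count is strictly less than $N_{\min}(\mathcal{P}_{\alpha,\beta})$ (by one or three, depending on the prefix type). Applying additivity
\[\{\alpha,\beta\}=\{\alpha,\gamma\}+\{\gamma,\beta\}\]
and the inductive hypothesis completes the argument.

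The principal obstacle is verifying that a reduced prefix followed by a $c$-peak always exists when $\mathcal{P}_{\alpha,\beta}$ is not itself reduced. The chief difficulty is the type-$ns$ grouping, because $o$ and $v(z)=ns$ can be adjacent in $\mathcal{T}$ without an intervening $c$-vertex: the path cannot always be split between consecutive minimal vertices, so the three-vertex block $o,ns,o$ must be treated as a single indivisible reduced unit. Once this grouping rule is applied consistently, a careful case analysis of the possible vertex labels between consecutive reduced units — using the adjacency specifications for $\mathcal{T}$ together with the classification of types by the invariant $N$ — confirms that a $c$-peak always appears after each unit, providing the splitting cusp $\gamma$ and ensuring the induction terminates.
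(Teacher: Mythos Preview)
Your strategy --- locate a splitting cusp $\gamma$ above a peak on the path, write $\{\alpha,\beta\}=\{\alpha,\gamma\}+\{\gamma,\beta\}$ with $\{\alpha,\gamma\}$ reduced, and induct --- is exactly the paper's. The gap is in how you select the prefix. You take the \emph{first minimal vertex} after $\alpha$'s $c$-tail as the center of the reduced prefix and then assert that past it the path ascends through a $c$-chain. But a minimal vertex lying on the path need not be a local minimum of the invariant function $i\mapsto n(i)$. Concretely, take a non-backtracking path whose central segment reads
\[
\dots,\ c((x,y),1),\ v(x),\ o,\ v(x'),\ o',\ v(x''),\ c((x'',y''),1),\ \dots
\]
with $v(x)$ and $v(x')$ of type $s$. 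This path is not reduced: its middle block $v,o,v,o,v$ matches none of (\ref{type e})--(\ref{type ns}), since the central $v(x')$ is $s$ rather than $ns$. Its first minimal vertex from the $\alpha$ side is the $s$-vertex $v(x)$, so your rule calls for an $s$-prefix; but the path leaves $v(x)$ toward $o$ (invariant $-1$), not toward any $c$-vertex, and no $c$-peak follows the alleged prefix. Your $o,ns,o$ grouping clause does not save this case, since the first minimal vertex here is $s$, not $o$.

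The repair --- and this is precisely what the paper does --- is to induct on the number of \emph{local minimums} of the function $n(i)$ rather than on $N_{\min}$. A local minimum of $n(i)$ is automatically a minimal vertex (only those have at least two neighbors of higher invariant), and by definition the path rises on both sides of it, so a local maximum genuinely appears before the next local minimum. The reduced symbol to peel off is the one centered at that first local minimum (in the example above this is the first $o$, not $v(x)$), exiting through the local-maximum vertex (here $v(x')$) to a cusp $\gamma$; the resulting $\{\alpha,\gamma\}$ is of type $o$, not type $s$. This also fixes the bookkeeping: the number of local minimums drops by exactly one at each step, whereas your ``by one or three'' claim for $N_{\min}$ fails in general (in the example above the minimal-vertex count drops by two).
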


\begin{figure}[htb]
\centering
\includegraphics[width=11cm, height=4cm]{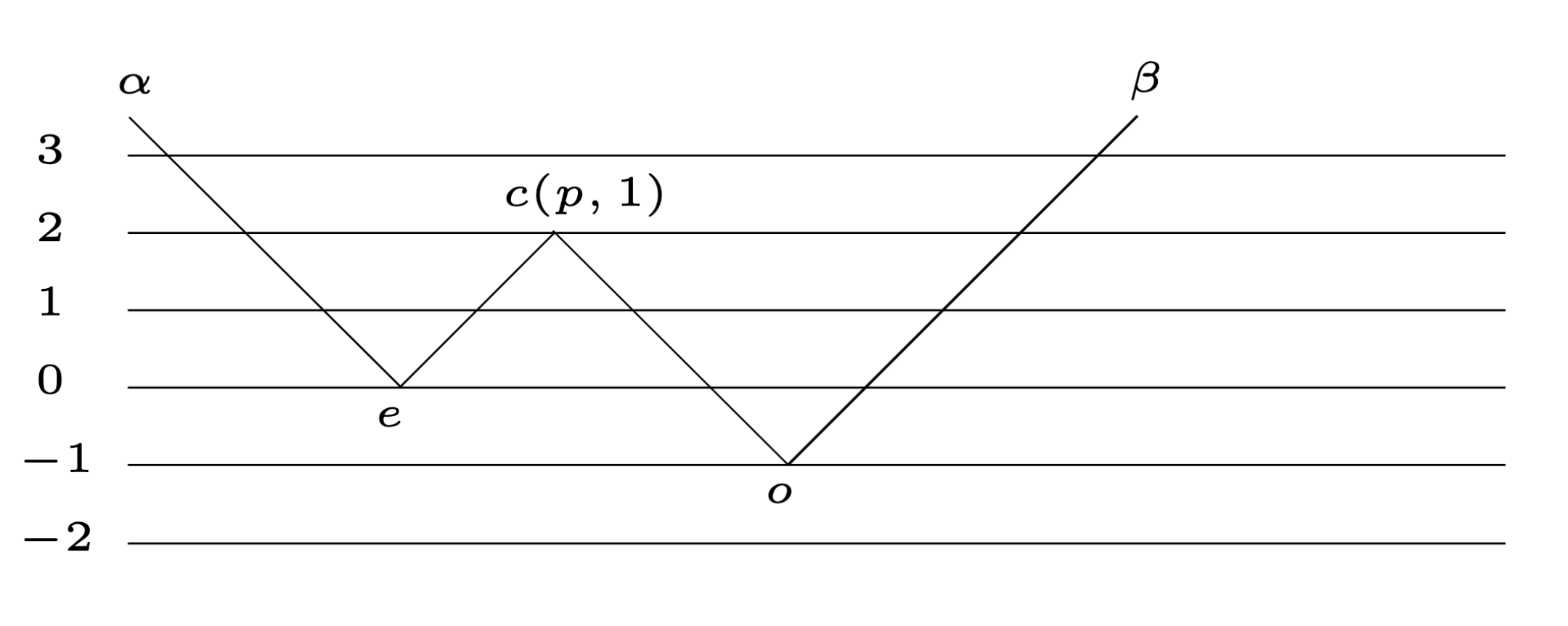}
\caption{A possible graph of the function $n(i)$ for a modular symbol $[\alpha, \beta]$.}
\label{fig: mod_graph}
\end{figure}

\begin{figure}[htb]
\centering
\includegraphics[width=11cm, height=4cm]{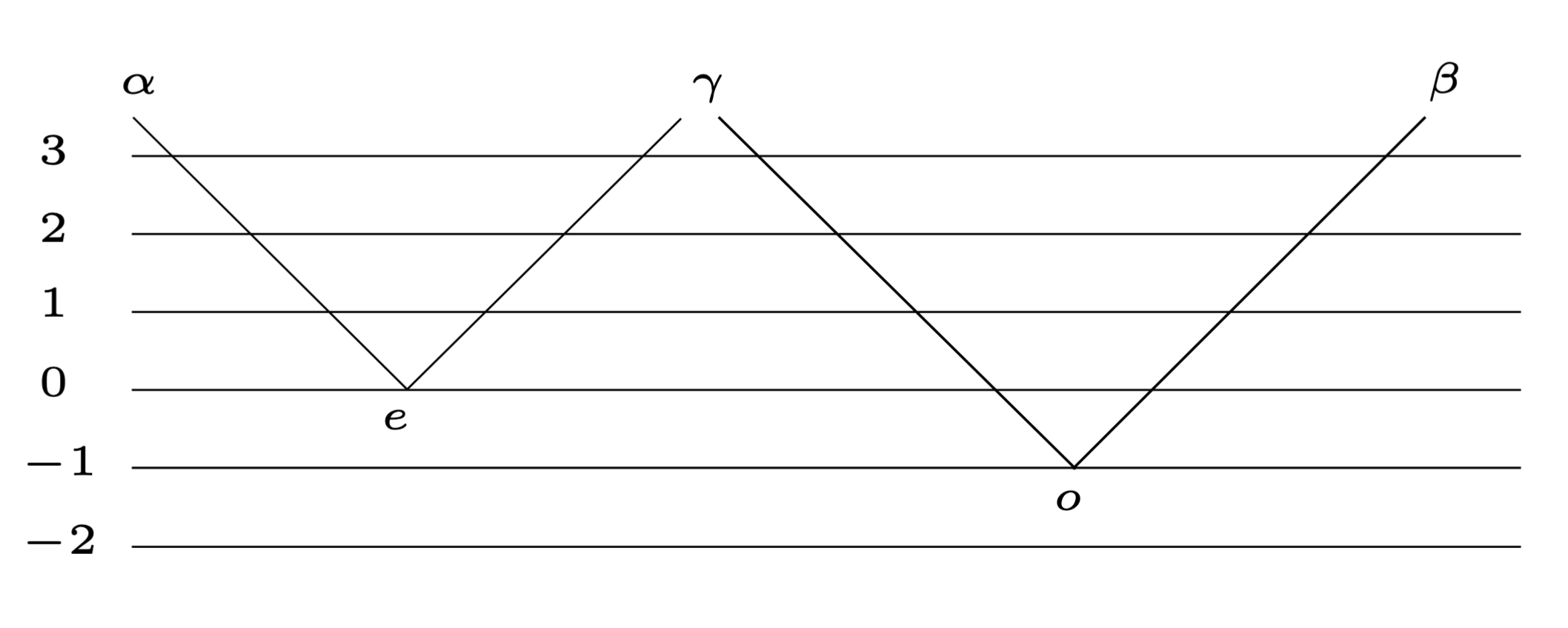}
\caption{The symbol $[\alpha, \beta]$ from Figure \ref{fig: mod_graph} as a sum of reduced symbols.}
\label{fig: unimod_graph}
\end{figure}

\begin{proof}
A modular symbol $[\alpha, \beta]$ on $\mathcal{T}$ can be identified with a double-sided sequence of vertex labels. Let $i \in \mathbb{Z}$ denote the $i$-th vertex and $n(i) \in \mathbb{Z}$ the Serre invariant of the vertex. We can graph the function $n(i)$ and extend linearly to $\mathbb{R}^2$. (See Figure \ref{fig: mod_graph} for an example). Since the symbol starts at cusp $\alpha$ and then goes to cusp $\beta$, $n(i)$ strictly decreases and then eventually strictly increases, meaning the symbol must pass through at least one minimal vertex. Recall that minimal vertices are the only vertices where there are at least two adjacent vertices of a higher invariant. The graph contains finitely many local minimums and maximums. The local minimums correspond to minimal vertices. The maximums correspond to any vertices other than those of invariant $-2$. 

The graph of the reduced symbols has exactly one (absolute) minimum and no local maximums. They are translates of the real-valued function $f(x) = |x|$. We decompose our original symbol over the vertices corresponding to the local minimums of the function $n(i)$. 
Suppose the modular symbol starts at a cusp $\alpha$, goes to the minimal vertex with coordinates $(i_1, n(i_1))$, up to the vertex corresponding to a local maximum $(j, n(j))$, down to the minimal vertex with coordinates $(i_2, n(i_2))$, and continues to $\beta$. That there is a path between $(i_1, n(i_1))$ and $(i_2, n(i_2))$ on $\mathcal{T}$ with no other minimal vertex between them means there is at least one cusp attached to both minimal vertices. If $(j, n(j))$ corresponds to a vertex of type $c(p,n)$, then there is a unique cusp attached to this vertex. If it is not a vertex of type $c(p,n)$, there is at least one cusp we can go to through this vertex. By construction, the reduced symbol corresponding to the minimal vertex at the minimum $(i_1, n(i_1))$ can enter or exit this cusp $\gamma$. Hence, we can decompose the original symbol at the minimal vertex corresponding to $(i_1, n(i_1))$ through the reduced symbol $[\alpha, \gamma]$. Now we have a sum of two symbols: $[\alpha, \gamma]$ and $[\gamma, \beta]$. Using induction on the number of local minimums gives us the desired result. (See Figure \ref{fig: unimod_graph} for an example).
\end{proof}

\section{Relations Among the Symbols and the Steinberg Homology}\label{sec: relations among symbols}
In this section, we find a method of computing the Steinberg homology, which allows us to find the relations between the reduced symbols.

\subsection{The Steinberg Module and Homology}
Let us define the Steinberg homology. The \textit{Tits building}, denoted $T_2$, associated with ${\rm{GL}_2}(F)$ is the simplicial complex where the vertices correspond to the cusps, elements in $\mathbb{P}^1(F)$, and there are no edges. (See \cite[Chapter 8]{kondo} for more details). An apartment in this building is the ordered pair $[\alpha, \beta]$, and we have the following result.
\begin{theorem}\textup{\cite[Theorem 11.3.1]{kondo}}
The Tits building $T_2$ has the homotopy type of a bouquet of 0-spheres.
\end{theorem}

Using the Tits building, we can define the Steinberg module and Steinberg homology as follows.
\begin{definition}\label{def: stein module}
The \textbf{Steinberg module}, denoted $St$, is the reduced $0$-homology group of $T_2$ over $\mathbb{Z}$. That is, $St = \tilde{H}_0(T_2, \mathbb{Z})$. 
\end{definition}

\begin{definition}
    The \textbf{Steinberg homology of $\Gamma$} is defined to be the homology group of $\Gamma$ with coefficients in the Steinberg module, denoted $H_*(\Gamma, St)$.
\end{definition}

\begin{definition}\label{def: steinberg hom q}
    The \textbf{Steinberg homology of $\Gamma$ with coefficients in $\mathbb{Q}$} is defined to be the homology group $H_*(\Gamma, St \otimes_{\mathbb{Z}} \mathbb{Q})$.
\end{definition}

It is this homology group that we are interested in since $\mathbb{M}_2 \cong St \otimes_{\mathbb{Z}} \mathbb{Q}$ and $\mathbb{M}_2(\Gamma) \cong H_0(\Gamma, St \otimes_{\mathbb{Z}} \mathbb{Q})$.

\subsection{The Sharbly Complex}
We use the results of \cite{brown} and \cite{ash_resolution} to find a method of computing the Steinberg homology. The first step is to construct resolutions of the Steinberg module. One such complex is the Sharbly complex.
\begin{definition}\textup{\cite[Definition 4]{ash_resolution}]}\label{def: sharbly}
The \textbf{Sharbly complex}, denoted $Sh_* = Sh_*(\mathcal{O})$, is the complex of $\mathbb{Z}[{\rm{GL}_2}(F)]$-modules defined as follows. As an abelian group, $Sh_k(\mathcal{O})$ is generated by symbols of $[v_1, ... , v_{k+2}]$, where the $v_i$ are nonzero vectors in $F^2$, modulo the submodule generated by the following relations:
\begin{itemize}
\item $[v_{\sigma{1}}, ... , v_{\sigma{k+2}}] - sgn({\sigma})[v_1, ... , v_{k+2}]$ for all permutations $\sigma$,
\item $[v_1, ..., v_{k+2}]$ if $v_1, ... , v_{k+2}$ do not span all of $F^2$,
\item $[v_1, ..., v_{k+2}] - [av_1, v_2, ... , v_{k+2}]$ for all $a \in F^\times$.
\end{itemize}
\end{definition}

The boundary operator $\partial: Sh_k \to Sh_{k-1}$ acts as follows:
\begin{equation}\label{eq: sharbly boundary map}
\partial([v_1, ... , v_{k+2}]) = \sum_{i=1}^{k+2} (-1)^{i-1} [v_1, ... , \hat{v_i}, ... , v_{k+2}]
\end{equation}
where $\hat{v_i}$ means to delete $v_i$. 

Writing the vector $v_i$ as $(v_{i1}, v_{i2})$, we can map $[v_1, v_2]$ to the modular symbol $[v_{11}/v_{12}, v_{21}/v_{22}]$. This is constant on the cosets of the group generated by the relations given in Definition \ref{def: sharbly} above, giving us a surjective ${\rm{GL}_2}(F)$-equivariant map $\phi_{Sh}: Sh_0 \to St$. This gives us the following result.
\begin{theorem}\textup{\cite[Theorem 5]{ash_resolution}}
The following is an exact sequence of ${\rm{GL}_2}(F)$-modules:
\begin{equation}\label{eq: sharbly res}
\dots \to Sh_k \to Sh_{k-1} \to \dots \to Sh_0 \xrightarrow{\phi_{Sh}} St \to 0.
\end{equation}
\end{theorem}

\begin{proof}
We can follow the argument of \cite{ash_resolution} making appropriate modifications. One first constructs the Lee and Szczarba resolution $\mathcal{C}_*(A)$ of the Steinberg module given in \cite[Chapter 2]{ash_resolution}. The resolution is constructed using $A$, a principal ideal domain. However, as stated at the beginning of \cite[Chapter 5]{ash_resolution}, replacing the ring with its field of fractions still gives a resolution and suffices for us. We can use $F$ to construct the chain complexes, $\mathcal{C}_*(F)$, giving us a resolution of $St$.

This chain complex gives another chain complex $\mathcal{C}'_*(F)$, constructed using the lines in $F^2$. The boundary maps of $\mathcal{C}'_*$ is induced from $\mathcal{C}_*$, and we get another resolution of $St$. The construction of this chain complex is discussed in \cite[Chapter 3]{ash_resolution}. 

Once we have constructed $\mathcal{C}'_*$, we can construct a map $\mathcal{C}'_k \to Sh_k$ as shown in \cite[page 6]{ash_resolution}. For each line $l_i \in \mathbb{P}^1(F)$, choose a primitive vector $v_i \in l_i \cap \mathcal{O}^2$. We can map $(l_1, ... l_{k+2})$ to $[v_1, ... , v_{k+2}]$, which extends to a ${\rm{GL}_2}(F)$-equivariant map $\mathcal{C}'_* \to Sh_*$. We can finish our proof by following the proof given for \cite[Theorem 5]{ash_resolution}, which shows the Sharbly complex forms a resolution of $St$. 
\end{proof}

A chain in $Sh_1$ is a modular symbol, and $Sh_n$ for $n \geq 2$ capture the relations between the symbols. There is a clear ${\rm{GL}_2}(F)$-action, and hence $\Gamma$-action on $Sh_*$, Using the resolution given in (\ref{eq: sharbly res}) and results of standard spectral sequences of double complexes given on \cite[page 169]{brown}, we have: 
\[E^1_{pq} = H_q(\Gamma, Sh_p \otimes_{\mathbb{Z}} \mathbb{Q}) \implies H_{p+q}(\Gamma, St \otimes_{\mathbb{Z}} \mathbb{Q})\] 

The homology groups $H_*(\Gamma, Sh_* \otimes_{\mathbb{Z}} \mathbb{Q})$ is the \textit{Sharbly homology of $\Gamma$ with coefficients in $\mathbb{Q}$}. It is defined in \cite[Definition 6]{ash_resolution} similarly to Definition \ref{def: steinberg hom q}. 
Since $F^{\times}$ is a finite set, \cite[Theorem 7]{ash_resolution} gives us the following isomorphism:
\begin{equation}\label{eq: sharbly isom}
H_*(\Gamma, Sh_* \otimes_{\mathbb{Z}} \mathbb{Q}) \cong H_*(\Gamma, St \otimes_{\mathbb{Z}} \mathbb{Q}).
\end{equation}
However, the isomorphism given in (\ref{eq: sharbly isom}) is computationally impractical. We need to use a ``smaller" complex contained in $Sh_*$ that allows for explicit computations.

\subsection{The Simplicial Complex $\mathcal{K}$}\label{sec: simplicial k}
The purpose of this section is to construct a simplicial complex $\mathcal{K}$ suitable for computations. This complex is an analog of the Voronoi complex, a polyhedral decomposition of the space of positive-definite quadratic forms built using Voronoi's explicit reduction theory (see \cite[Chapter 3]{philippe} and \cite[Chapter 5]{ash_resolution}). We want to show that the chain complex formed by $\mathcal{K}$ forms a resolution of the Steinberg module.

For each minimal vertex $v$, we construct a corresponding simplicial complex, which we call a \textit{$v$-complex} or a \textit{minimal vertex complex}. 
\begin{itemize}
\item For each $e(p)$-vertex, there are $q+1$ cusps whose pairs form $e(p)$ symbols. Letting the cusps correspond to the vertices, an $e(p)$-complex is the $(q+1)$-dimension complex formed by taking all $q+1$ vertices. 
\item For an $s$-vertex, there are two cusps that form $s$-symbols. An $s$-complex is an edge.
\item For an $o$-vertex, there are $q+1$ cusps whose pairs form either $o$- or $s$-symbols. Letting the cusps correspond to the vertices, on $o$-complex is the $(q+1)$-dimension complex formed by taking all $q+1$ vertices. This shows us that $s$-complexes are subcomplexes of an $o$-complex. We make sure to keep track of these edges.
\item Finally, for an $ns$-vertex, there are $(q+1)^2$ cusps whose pairs form either $o$-, $s$-, or $ns$-symbols. The $ns$-complex is the $(q+1)^2$-dimension complex formed by taking all the $(q+1)^2$ vertices. There are $q+1$ o-complexes contained inside of the $ns$-complex. We again make sure to keep track of these complexes. 
\end{itemize}

\begin{figure}[htb]
\centering
\includegraphics[width=5.5cm, height=5.5cm]{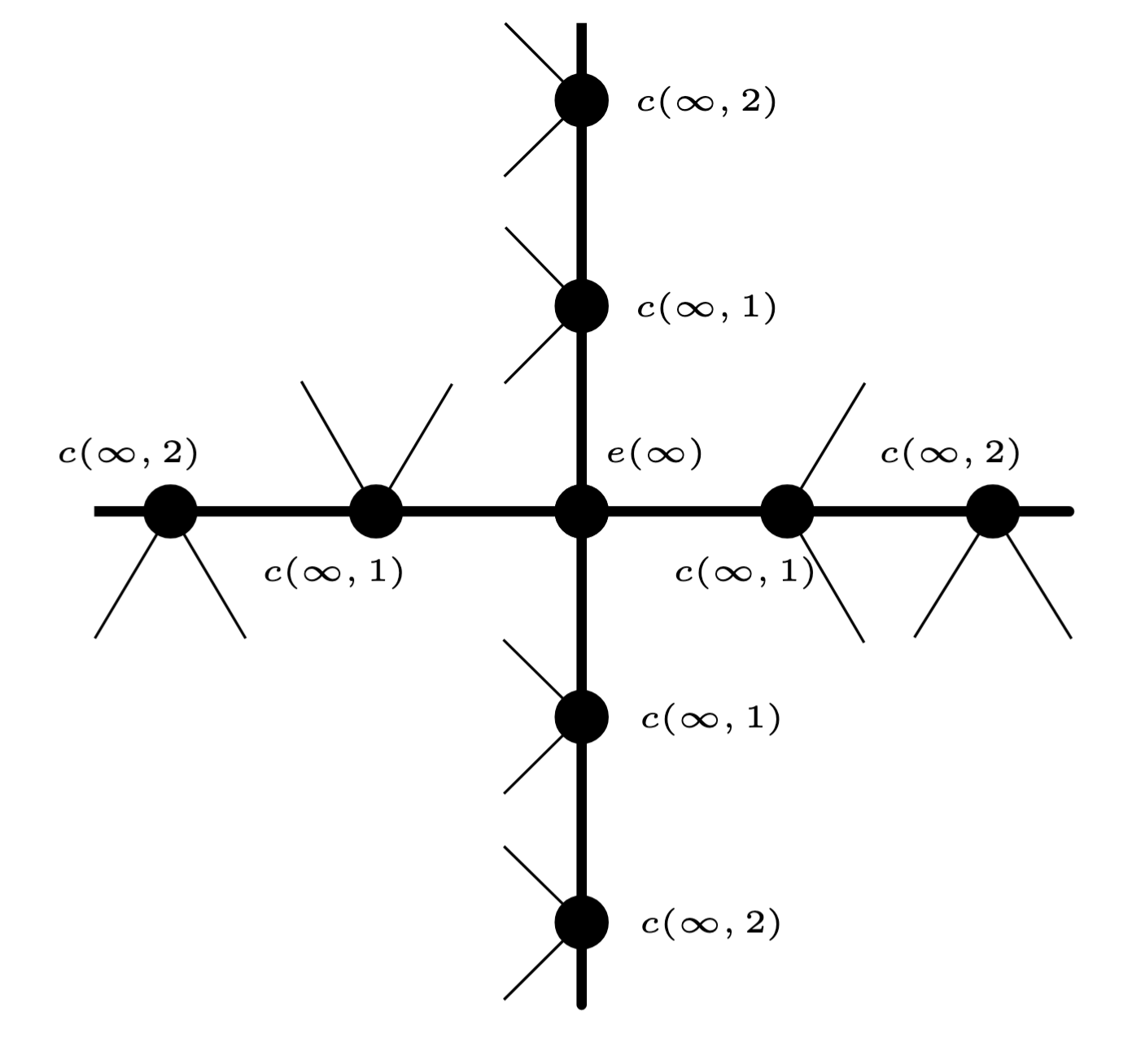}
\caption{The cusps attached to an $e(\infty)$-vertex that form a $e(\infty)$-complex for $y^2=x^3+x-1$ over $\mathbb{F}_3$.}
\label{fig: e_symbol}
\end{figure}

\begin{figure}[htb]
\centering
\includegraphics[width=4.5cm, height=5.5cm]{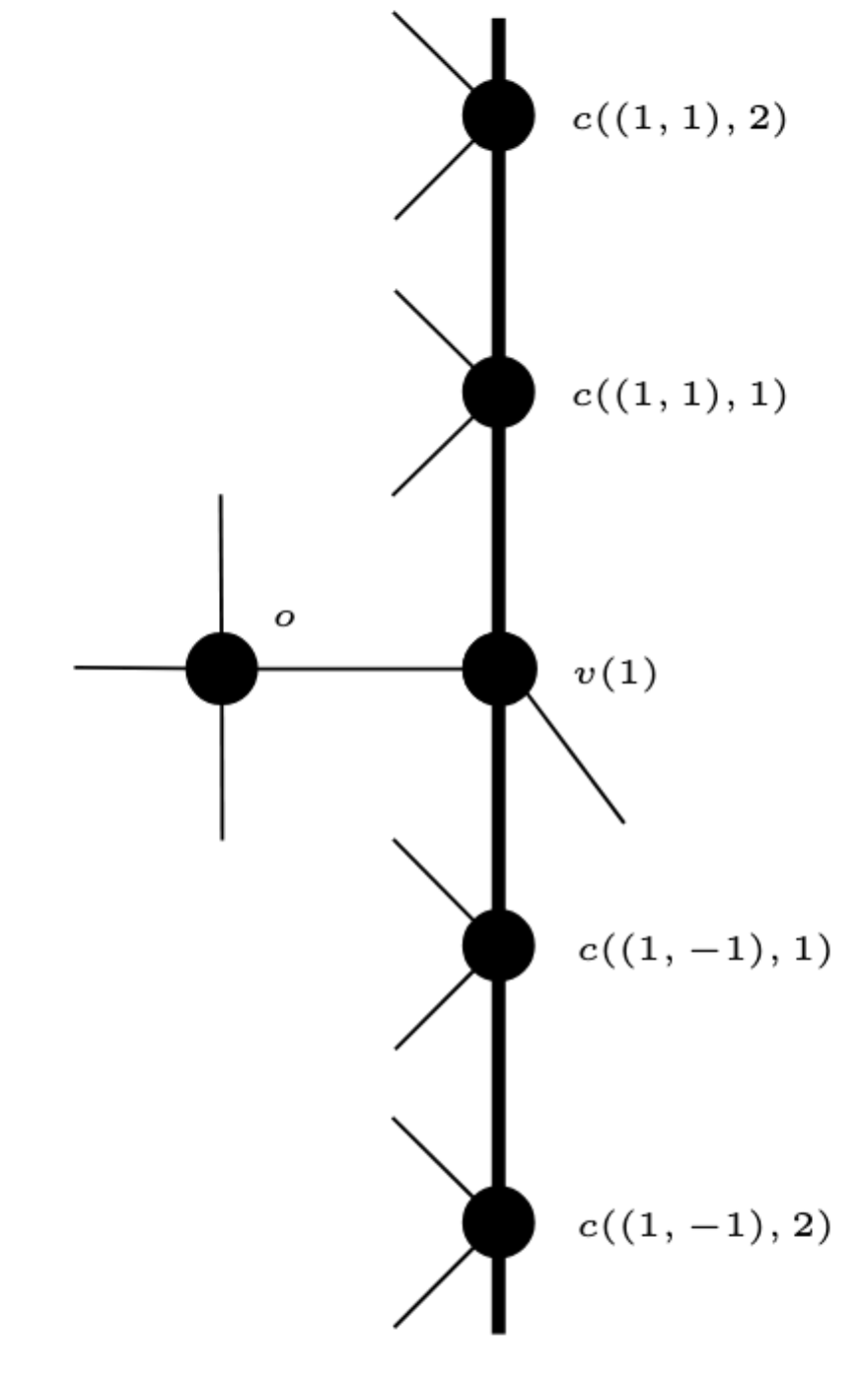}
\caption{The cusps attached to an $s$-vertex that form a $s$-complex for $y^2=x^3+x-1$ over $\mathbb{F}_3$.}
\label{fig: v2_symbol}
\end{figure}

\begin{figure}[htb]
\centering
\includegraphics[width=5.5cm, height=5.5cm]{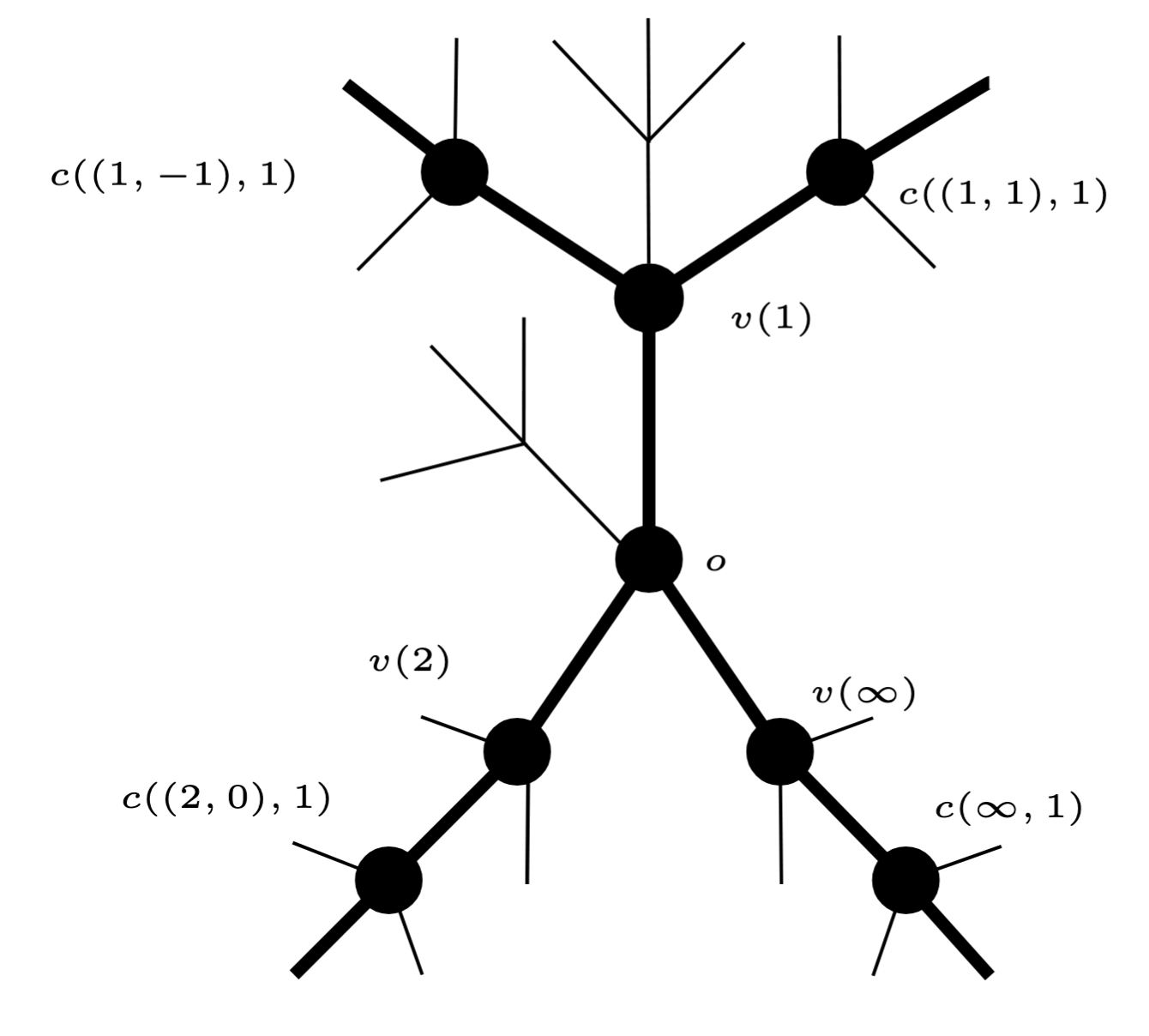}
\caption{The cusps attached to an $o$-vertex that form a $o$-complex for $y^2=x^3+x-1$ over $\mathbb{F}_3$.}
\label{fig:o_symbol}
\end{figure}

\begin{figure}[htb]
\centering
\includegraphics[width=6cm, height=6cm]{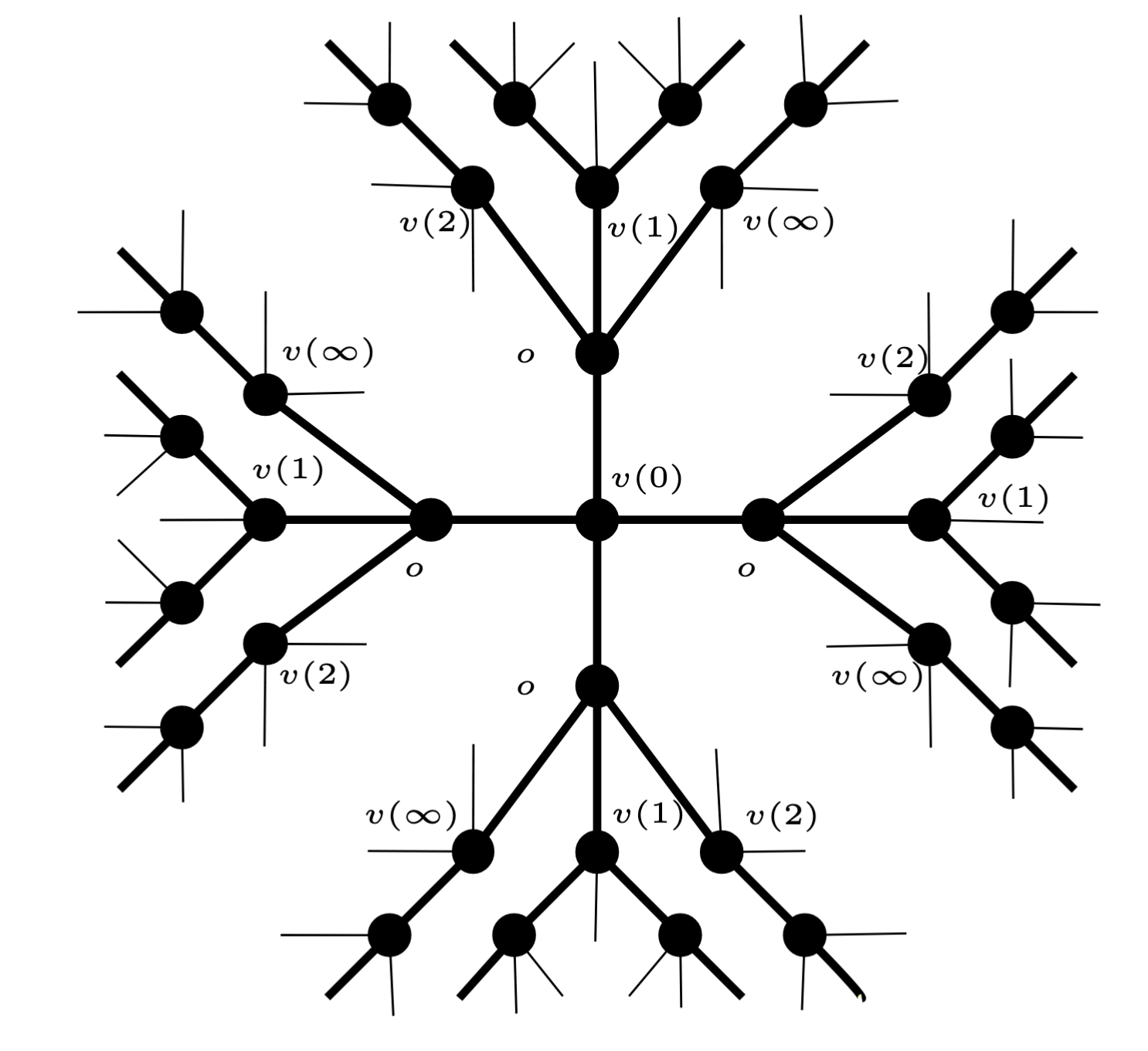}
\caption{The cusps attached to an $ns$-vertex that form a $ns$-complex for $y^2=x^3+x-1$ over $\mathbb{F}_3$.}
\label{fig: v_non_symbol}
\end{figure}

\begin{example}\label{ex: v cx}
\normalfont We return to Example \ref{ex: example red sym} with elliptic curve $E/\mathbb{F}_3$ given by the equation $y^2 = x^3 - x - 1$. The $e(\infty)$- and $e((-1,0))$-complexes are solid tetrahedrons. The $o$-complex is also a solid tetrahedron with one of its edges corresponding to the $s$-complex. Finally, an $ns$-simplex is a 16-dimension complex that contains 4 tetrahedrons corresponding to the $o$-complexes. Figures \ref{fig: e_symbol} to \ref{fig: v_non_symbol} show the cusps attached to a minimal vertex $v$ which form the $v$-complex.
\end{example}

We will build $\mathcal{K}$ using the individual $v$-complexes. First, we define what it means for two minimal vertices $v$ and $w$ to \textit{interact}. Let $C_v, C_w$ be the set of cusps on $v$ and $w$ respectively whose pairs formed reduced symbols i.e. the set of cusps attached to $v$ and $w$. If the intersection is empty, $v$ and $w$ have \textit{no interactions}. If the intersection contains one element, we say $v$ and $w$ \textit{interact trivially} and there is more than one element, then they \textit{interact non-trivially}. 

\begin{figure}[htb]
\centering
\includegraphics[width=9cm, height=11cm]{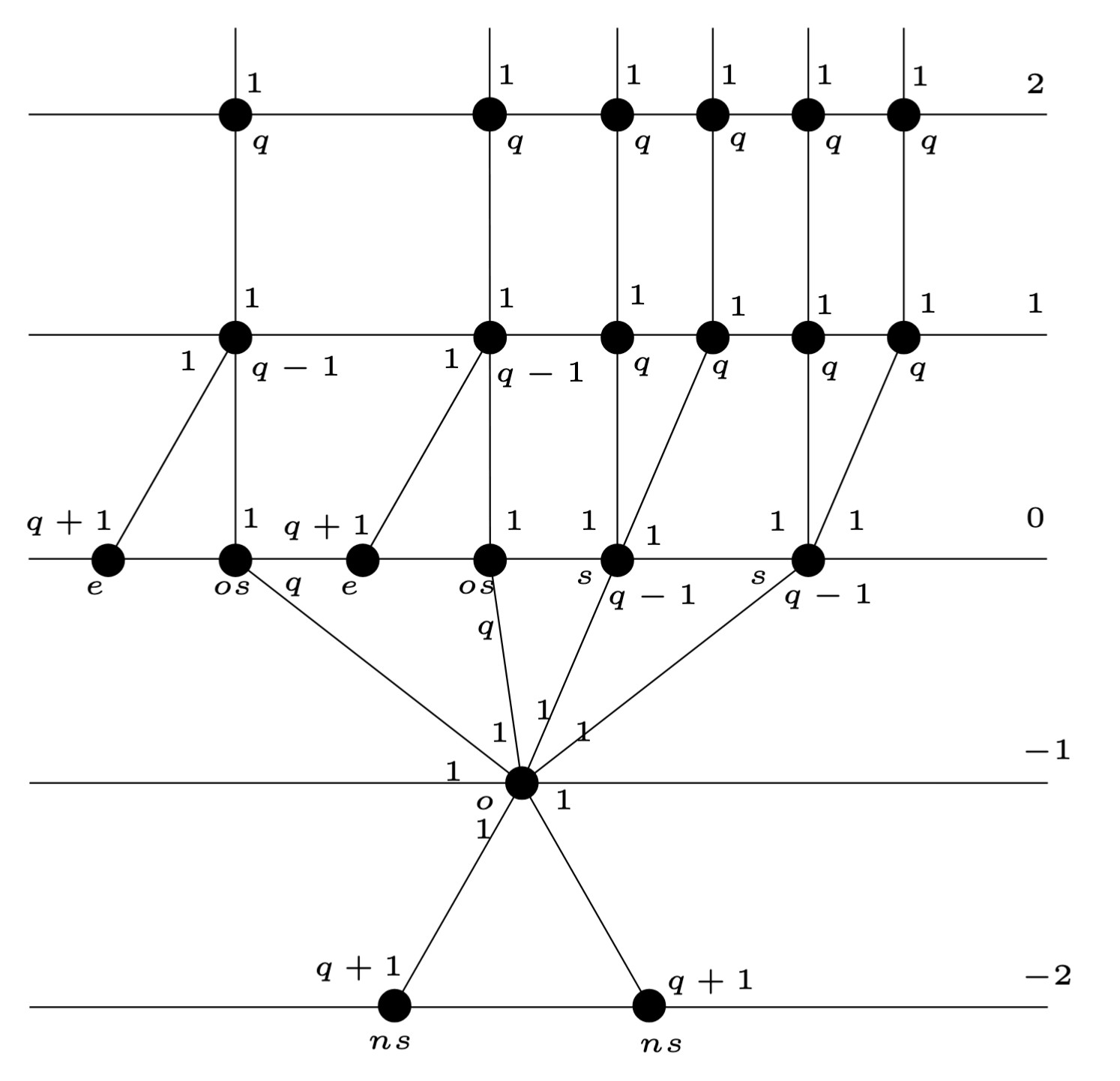}
\caption{The subtree $\mathcal{S}$ where each vertex is matched to its invariant and has the number of adjacent vertices one would see on $\mathcal{T}$. For $N(E) \geq 1$, the vertices have label $c(p,N(E))$ }\label{fig: quot_to_large}
\end{figure}

For an elliptic curve $E$, we know how to construct $\mathcal{T}$ and label each of its vertices. This allows us to map each vertex $v$ of $\mathcal{S}$ to its Serre invariant and identify how many adjacent vertices of $v$ are of a certain type. (See Figure \ref{fig: quot_to_large}). These identifications on $\mathcal{S}$ and observation of the labels of the vertices on $\mathcal{T}$ tells us how different minimal vertices interact.
We use this to define the notion of \textit{interactions} between the minimal vertex complexes. This will show us that we can attach a $v$-complex and $w$-complex by identifying their shared subcomplex in $\mathcal{K}$.

Given minimal vertices $v$ and $w$, the following $v$-and $w$-complexes do not share a common subcomplex and hence cannot be attached to each other in $\mathcal{K}$. We say that there is \textit{no interaction between the $v$- and $w$-complexes}. Such interactions are given by the following.

\begin{itemize}
\item An $s$-complex and an $e$-complex,
\item Two $e$-complexes with labels $e(p)$ and $e(p')$ where $p \neq p'$,
\item Two $s$-complexes with labels $v(l)$ and $v(l')$ where $l \neq l'$,
\item Two $ns$-complexes with labels $v(l)$.
\end{itemize}

Next, the following $v$- and $w$-complexes could share a common vertex (a cusp). Two such complexes are attached to each other by identifying their shared vertex in $\mathcal{K}$. We say that there is a \textit{trivial interaction between the $v$- and $w$-complexes}. Such interactions are given by the following.
\begin{itemize}
\item An $e$-complex both with labels $e(p)$,
\item Two $s$-complexes both with labels $v(l)$, 
\item Two $o$-complexes,
\item Two $ns$-complexes with labels $v(l)$ and $v(l')$ where $l \neq l'$. 
\end{itemize}

Finally, the following $v$- and $w$-complexes could share a subcomplex of dimension greater than one. Such two complexes are attached to each other by identifying their shared subcomplex in $\mathcal{K}$. We say that there is a \textit{non-trivial interaction between the $v$- and $w$-complexes}. Such interactions are given by the following.
\begin{itemize}
\item Two $o$-complexes could share an $s$-complex with label $v(l)$,
\item Two $ns$-complexes with labels $v(l)$ and $v(l')$ with $x \neq x'$ could share an $o$-complex. 
\end{itemize}
For trivial and non-trivial interactions, we use the phrase ``could share" because it is possible that on $\mathcal{T}$, two minimal vertices $v$ and $w$ do meet the criteria of labels but are two far apart that $C_v \cap C_w = \emptyset$.

We now construct $\mathcal{T}$ and $\mathcal{K}$ simultaneously from scratch. To avoid confusion, we use the terms ``vertices" and ``edges" to describe $\mathcal{T}$ and ``$v$-complexes" or ``minimal vertex complex" to describe $\mathcal{K}$. First, we start off with one $ns$-vertex $v_0$. This gives us a $ns$-complex, which we label $\mathcal{K}_0$. Then we add the adjacent vertices to $v_0$. These all correspond to vertices labeled $o$. On $\mathcal{K}_0$, these $o$-complexes are already present inside the $ns$-complex. We call this complex $\mathcal{K}_1$ and $\mathcal{K}_0 = \mathcal{K}_1$. We then add the next set of adjacent vertices on $\mathcal{T}$. In this case, there may be $s$-vertices, which are already present in the $o$-complexes. But there may be additional $ns$-vertices corresponding to a different type of $ns$-vertex from $v_0$. These $ns$-complexes are added to $\mathcal{K}_1$ by identifying their shared $o$-complex. This is now $\mathcal{K}_2$. We continue inductively in this way. The way that we attach new $v$-complexes appearing from new vertices showing up on $\mathcal{T}$ is by identifying the shared minimal vertex complex as 
described by the interactions between the complexes. For each $n$, we have that $\mathcal{K}_{n-1} \subseteq \mathcal{K}_n$. Taking the direct limit gives us a simplicial complex, which we call $\mathcal{K}$. This is a $(q+1)^2$-dimension complex. It is also a CW-complex equipped with the CW-topology.  We need the following result to show that we can use $\mathcal{K}$ to obtain a resolution of $St$.

\begin{lemma}\label{lem: k contract}
The simplicial complex $\mathcal{K}$ is contractible.
\end{lemma}
\begin{proof}
We first need to prove and use Lemmas \ref{lem: fund group kn trivial} and \ref{lem: kn contract}.
\begin{lemma}\label{lem: fund group kn trivial}
For all $n$, $\pi_1(\mathcal{K}_n)$ is trivial. 
\end{lemma}

\begin{proof}
Suppose that this is not the case. First, suppose a $v$-complex gets attached to $\mathcal{K}_{n-1}$ at two different vertices of $\mathcal{K}_{n-1}$ labeled $\alpha$ and $\beta$ where $[\alpha, \beta]$ does not form an edge. It suffices to use vertices since every subcomplex that is a minimal vertex complex contains at least one vertex. This implies we now have a reduced symbol $[\alpha, \beta]$ of type $v$. However, the fact that $\alpha, \beta$ are two different vertices of $\mathcal{K}_{n-1}$ tells us that there is already a unique non-backtracking path between the cusp $\alpha$ and $\beta$ on $\mathcal{T}$ that is not a reduced symbol. So this path goes through at least two minimal vertices on $\mathcal{T}$. In particular, the path cannot correspond to a $v$-symbol as we are claiming. This means there is a loop on $\mathcal{T}$, but this is impossible because $\mathcal{T}$ is contractible by \cite[Theorem 2.1]{grayson}.
 
Second, suppose two $v$-complexes $V_1$ and $V_2$ get attached to $\mathcal{K}_{n-1}$ at vertices $\alpha_1$ and $\alpha_2$ respectively. Suppose further that $V_1$ and $V_2$ are attached to each other at vertex $\beta$. We already have some path from vertices  $\alpha_1$ to $\alpha_2$ on $\mathcal{K}_{n-1}$, meaning there is a unique non-backtracking path $P_1$ from cusps $\alpha_1$ to $\alpha_2$ on $\mathcal{T}$. However, now there is a new path $P_2$ on $\mathcal{K}_n$ from vertices $\alpha_1$ to $\alpha_2$ through vertex $\beta$. On $\mathcal{T}$, this implies that there is a path from cusps $\alpha_1$ to $\alpha_2$ that passes through the minimal vertices $v_1$ and $v_2$ corresponding to $V_1$ and $V_2$ respectively. On $P_2$, the vertex $\beta$ corresponds to a rational end $c(p,n)$. No vertex of this end can be present in the path on $\mathcal{T}$ given by $P_1$. So $P_1$ and $P_2$ do not give the same path on $\mathcal{T}$. This implies that there is a loop on $\mathcal{T}$, which is impossible.
\end{proof}

\begin{lemma}\label{lem: kn contract}
For all $n$, $\mathcal{K}_n$ is contractible.
\end{lemma}

\begin{proof}
 Clearly, each $v$-complex is contractible. In particular, $\mathcal{K}_0$, an $ns$-complex, is contractible to some point $p$ on $\mathcal{K}_0$. Let $S_n = \mathcal{K}_n \setminus \mathcal{K}_{n-1}$. By Lemma \ref{lem: fund group kn trivial}, the complex $S_n$ is a set of disjoint $v$-complexes, each of which can be retracted to the subcomplex it shares with $\mathcal{K}_{n-1}$. This retraction can be made explicit. Let $v_1, ... , v_m$ denote the basis vectors of a $v$-complex $V$ in $S_n$ imbedded in $\mathbb{R}^m$. The $v_i$ also correspond to the vertices of $V$. The sum $\sum_{i=1}^m
c_i v_i$ such that $\sum_{i=1}^m c_i = 1$ is the coordinate of a point on $V$. Suppose we want to retract $V$ onto the complex given by vertices $v_1, ... , v_m'$ where $m' < m$. Let $A(t) = (a_1t + a_2t + \dots + a_{m'}t + a_{m'+1} \dots + a_m)/m'$ and $B(t) = (1-t)(a_1 + \dots a_{m'})/(m-m')$. Then the retraction is given by $R(t) = \sum_{i=1}^{m'} A(t) v_i + \sum_{j = m'+1}^m B(t) v_j$ for $t \in [0,1]$. At $t=0$, we have the complex $V$, and at $t=1$, we have the complex given by the vertices $v_1, \dots , v_{m'}$. By induction, each $\mathcal{K}_n$ is contractible to $p$. 
\end{proof}
\[ 
\begin{tikzcd}
\mathcal{K}_0 \arrow[hookrightarrow]{r} \arrow[swap]{d}{f_0} & \mathcal{K}_1 \arrow[hookrightarrow]{r} \arrow[swap]{d}{f_1} & \dots \arrow[hookrightarrow]{r} &\mathcal{K}_n \arrow[hookrightarrow]{r} \arrow[swap]{d}{f_n} & \dots\\%
p \arrow{r}{id} & p \arrow{r}{id} & \dots \arrow{r}{id} & p \arrow{r}{id} & \dots
\end{tikzcd}
\]
The diagram above clearly commutes. Each $\mathcal{K}_n$ is homotopy equivalent to the point $p$ where $f_n$ is the contraction map. The complex $\mathcal{K}$ is the direct limit of the $\mathcal{K}_n$ through inclusion and $p$ is the direct limit of the bottom sequence. Then \cite[Lemma 2.1.10]{ponto} tells us that $\mathcal{K}$ is homotopy equivalent to $p$, proving Lemma \ref{lem: k contract}.
\end{proof}

Let $Ch_n(\mathcal{K})$ be the chain group formed by the $n$-cells of $\mathcal{K}$. The vertices of $\mathcal{K}$ is the set of cusps, which we denote as $\{\text{cusps}\}$. Let $\mathcal{C}_*(\mathcal{K})$, or $\mathcal{C}_*$, be the relative cellular chain complex of $\mathcal{K}$ with respect to the vertices of $\mathcal{K}$. That is, $\mathcal{C}_n(\mathcal{K}) = Ch_n(\mathcal{K})/Ch_n(\{\text{cusps}\})$. From this, we see that $\mathcal{C}_n(\mathcal{K}) = Ch_n(\mathcal{K})$ for $n \geq 1$. The boundary map of $\mathcal{C}_*$ is induced by $Ch_*$, which in turn is given by the boundary map of $Sh_*$ given in (\ref{eq: sharbly boundary map}). We now have a resolution of $St$ using $\mathcal{K}$. 
\begin{theorem}\label{thm: k resolution}
The following is an exact sequence of ${\rm{GL}_2}(F)$-modules:
\begin{equation}\label{eq: k res}
0  \to \mathcal{C}_{(q+1)^2} \dots \to \mathcal{C}_k \to \mathcal{C}_{k-1} \to \dots \to \mathcal{C}_2 \xrightarrow{\partial_2} \mathcal{C}_1 \xrightarrow{\phi} St \to 0.
\end{equation}
\end{theorem}
\begin{proof}
The map $\phi$ takes a 1-cell $(\alpha, \beta)$ and maps it to its class, or the modular symbol $[\alpha, \beta]$, in $St$. This map is ${\rm{GL}_2}(F)$-equivariant and surjective by Theorem \ref{uni sum}. Next, we need to show that kernel of $\phi$ is equal to the image of $\partial_2$. This is rather long, and so we dedicate Section \ref{sec: res stein} to its proof. The exactness at all the other places follows by Lemma \ref{lem: k contract}.
\end{proof}

The complex is contractible, in particular, acyclic. There is a clear ${\rm{GL}_2}(F)$-action, and hence $\Gamma$-action on $\mathcal{K}$. The stabilizer groups of the cells in $\mathcal{C}_*$ are finite as we will later prove. Like the Sharbly complex, we get the following result on the spectral sequence of $\mathcal{C}_*$:
\[E^1_{pq} = H_q(\Gamma, \mathcal{C}_p \otimes_{\mathbb{Z}} \mathbb{Q}) \implies H_{p+q}(\Gamma, St \otimes_{\mathbb{Z}} \mathbb{Q}) \]
The homology groups $H_*(\Gamma, \mathcal{C}_* \otimes_{\mathbb{Z}} \mathbb{Q})$ are called the \textit{equivariant relatively homology groups of $(\Gamma, \mathcal{K})$}. Tensoring our coefficient module with $\mathbb{Q}$, a field of characteristic zero, ensures that $E^1_{pq} = 0$ for all $q \neq 0$ and that the spectral sequence collapses at $E^2$. 

Using a similar argument to the proof of \cite[Theorem 7]{ash_resolution}, we get the following result, the analog to \cite[Corollary 12]{ash_resolution}.
\begin{corollary}\label{cor: k steinberg isom}
There is an isomorphism:
\begin{equation}
H_*(\Gamma, \mathcal{C}_* \otimes_{\mathbb{Z}} \mathbb{Q}) \cong H_*(\Gamma, St \otimes_{\mathbb{Z}} \mathbb{Q})
\end{equation}
\end{corollary}
Since $\mathcal{C}_*$ encapsulates information on the reduced symbols, which are finite modulo the action of $\Gamma$, and their relations, the isomorphism above gives us a better chance at computing the Steinberg homology. 

\subsection{The Differential Map}
In order to compute the Steinberg homology, we need to find the relations of the reduced symbols. To do this, we define a differential map that coincides up to sign with the differential map on $E^1_{p0}$, using the results of equivariant relative homology and spectral sequences given in \cite{brown} and \cite{philippe}. 

First, induce a global orientation on all cells of $\mathcal{C}_*$. Let $\Sigma^*_p = \Sigma_p^*(\Gamma)$, for $p \geq 1$, denote the set of representatives of cells in $\mathcal{C}_p$ modulo the action of $\Gamma$. Let $\Sigma_p \subseteq \Sigma_p^*$ denote the set of cells $\sigma$ such that any element of the stabilizer group $\Gamma_{\sigma}$ preserves the orientation on $\sigma$. Let $V_p$ be the free abelian group generated by $\Sigma_p$.

We define the following map:
\begin{equation}
d_p: V_p \to V_{p-1}.
\end{equation}
Let $\sigma \in \Sigma_p$ be a $p$-cell with orientation given by $(v_1, v_2, \dots , v_p)$ where $v_j$ are the vertices. Let $\tau' \in \Sigma_{p-1}$ be a face of $\sigma$ with orientation $(v_1, v_2, \dots , \hat{v_i}, \dots v_p)$. Then we define $\epsilon(\tau', \sigma) = (-1)^{i+1}$. Now let $\tau \in \Sigma_{p-1}$ be the unique $\Gamma$-representative of $\tau'$ and $\gamma \in \Gamma$ such that $\tau = \gamma \cdot \tau'$. We set $\eta(\tau, \tau') = 1$ if the orientations are compatible and $-1$ if not. 
Using this, we get
\begin{equation}\label{eq: our_diff}
    d_p(\sigma) = \displaystyle\sum_{\tau \in \Sigma_{p-1}} \displaystyle\sum_{\tau'} \eta(\tau, \tau')\epsilon(\tau',\sigma)\tau.
\end{equation}

According to \cite[Equation 7.7]{brown}, we have the following:
\begin{equation}\label{eq: spec1}
E^1_{pq} = \bigoplus_{\sigma \in \Sigma_p^*} H_q(\Gamma_{\sigma}, \mathbb{Q}_{\sigma}) \implies H_{p+q}(\Gamma, \mathcal{C}_* \otimes_{\mathbb{Z}} \mathbb{Q})
\end{equation}
The coefficient module $\mathbb{Q}_{\sigma}$ is defined as follows. Let $\mathbb{Q}_{\sigma}$ be the $\Gamma_{\sigma}$-module that is additively isomorphic to $\mathbb{Q}$ and acts by $1$ if $\gamma \in \Gamma_{\sigma}$ preserves the orientation of $\sigma$ and $-1$ if not.

As stated earlier, since $\mathbb{Q}$ is a field of characteristic 0, the terms in (\ref{eq: spec1}) vanish if $q > 0$, and the spectral sequence collapses at $E_2$. If $\Gamma_{\sigma}$ contains an element that changes the orientation of $\sigma$, then 2 kills $H_0(\Gamma_{\sigma},\mathbb{Q}_{\sigma})$. If not, then $H_0(\Gamma_{\sigma},\mathbb{Q}_{\sigma}) \cong \mathbb{Q}_{\sigma}$. This means modulo $\mathcal{S}_2$, the Serre class of order a power of 2, Equation (\ref{eq: spec1}) becomes:
\[E^1_{p0} = \bigoplus_{\sigma \in \Sigma_p} \mathbb{Q}_{\sigma}.\]
There is an isomorphism between $E^1_{p0}$ and $V_p$ given by the choice of orientation for each $\sigma \in \Sigma_p$.

In \cite[Chapter VII, Proposition 8.1]{brown}, there is a differential map:
\[d^1_p : E^1_{p0} \to E^1_{(p-1)0},\]
and we show that our map $d_p$ is this map up to a sign. 

Given $\sigma \in \Sigma^*_p$ with a face $\tau'$, let $\Gamma_{\sigma\tau} = \Gamma_{\sigma} \cap \Gamma_{\tau}$. We have the transfer map: 

\[t_{\sigma\tau'} : H_*(\Gamma_{\sigma}, \mathbb{Q}_{\sigma}) \to H_*(\Gamma_{\sigma\tau'}, \mathbb{Q}_{\sigma}).\]

There is a natural map $\mathbb{Q}_{\sigma} \to \mathbb{Q}_{\tau'}$ that induces a map on the homology:
\[u_{\sigma\tau'} : H_*(\Gamma_{\sigma\tau'}, \mathbb{Q}_{\sigma}) \to H_*(\Gamma_{\tau'}, \mathbb{Q}_{\tau'}).\]

Finally, suppose $\tau \in \Sigma_{n-1}^*$ is the representative of $\tau'$ modulo the action of $\Gamma$. So there exists $\gamma \in \Gamma$ such that $\tau' = \gamma \cdot \tau$ and $\gamma$ induces the followinng isomorphism:

\[v_{\tau'\tau}: H_*(\Gamma_{\tau'}, \mathbb{Q}_{\tau'}) \to H_*(\Gamma_{\tau}, \mathbb{Q}_{\tau}).\]

The restriction of $d^1_p$ to $H_*(\Gamma_{\sigma}, \mathbb{Q}_{\sigma})$ is equal, up to sign, to 
\begin{equation}\label{eq:brown_diff}
    \displaystyle\sum_{\tau'} v_{\tau'\tau} u_{\sigma\tau'} t_{\sigma\tau'}
\end{equation}
where $\tau'$ represents the faces of $\sigma$ modulo the action of $\Gamma_{\sigma}$. 

It remains to show that our map $d_p$ coincides with the map $d^1_p$ given by \cite{brown} up to a sign. 

Suppose that $\tau \in \Sigma_{p-1}$, then the orientation module $\mathbb{Q}_{\sigma}$ is equal to $\mathbb{Q}$, and so we have that
$$v_{\tau'\tau}: H_0(\Gamma_{\tau'}, \mathbb{Q}_{\tau'}) = \mathbb{Q} \to H_0(\Gamma_{\tau}, \mathbb{Q}_{\tau}) = \mathbb{Q}$$
is multiplication by $\eta(\tau, \tau')$. 

When $\sigma \in \Sigma_p$, then again $\mathbb{Q}_{\sigma} = \mathbb{Q}$, and we have that 
$$u_{\sigma\tau'}: H_0(\Gamma_{\sigma\tau'}, \mathbb{Q}_{\sigma}) = \mathbb{Q} \to H_0(\Gamma_{\tau'}, \mathbb{Q}_{\tau'}) = \mathbb{Q}$$
is multiplication by $\epsilon(\tau',\sigma)$, up to a sign depending on $p$.

Finally, the map
$$t_{\sigma\tau'}: H_0(\Gamma_{\sigma}, \mathbb{Q}_{\sigma}) = \mathbb{Q} \to H_0(\Gamma_{\sigma\tau'}, \mathbb{Q}_{\sigma}) = \mathbb{Q}$$
is multiplication by $k = [\Gamma_{\sigma} : \Gamma_{\sigma\tau'}]$. Multiplying the sum (\ref{eq:brown_diff}) by $k$ is the same as taking the sum over all faces of $\sigma$ as we did for our map in (\ref{eq: our_diff}). So up to a sign depending on only $p$, the map in (\ref{eq:brown_diff}) equals the map in (\ref{eq: our_diff}).

Our $E^1$ page consists of the following:
\[
0 \leftarrow E^1_{10} \xleftarrow{d^1_2} E^1_{20} \xleftarrow{d^1_3} E^1_{30} \xleftarrow{d^1_4} ...
\] 
We are only concerned with computing $E^1_{10}$ and $im(d^1_2)$ since 
\[E^1_{10}/im(d^1_2) = H_0(\Gamma, St \otimes_{\mathbb{Z}} \mathbb{Q}) \cong \mathbb{M}_2(\Gamma)\]
 by Corollary \ref{cor: k steinberg isom} and the definition of coinvariants.

\subsection{Stabilizer Groups of the Reduced Symbols}\label{subsec: stab groups}
We now focus on computing $E^1_{10}$ and $im(d^1_2)$, which means finding the relations among the reduced symbols. The first task is to find the stabilizer group of the $1$- and $2$-cells in $\overline\Gamma$.

The stabilizers of the cusps are as follows. Given a cusp $\alpha \in \mathbb{P}^1(F)$, we can identify this cusp with the infinite ray given by the set of vertices $(c(p,n))_{n\geq N}$ for $N \geq 1$. Let $\overline\Gamma_i \subset \overline\Gamma$ be the stabilizer of $c(p,i)$, which can be computed using \cite[Proposition 6, 9]{takahashi}. We have the following result.

\begin{lemma}\textup{\cite[Lemma 3.1]{mason_auto}}\textup{\cite[Theorem 4]{takahashi}}.
    Let $\overline\Gamma_{\alpha}$ denote the stabilizer of $\alpha$, then we have $\overline\Gamma_i \subseteq \overline\Gamma_{i+1}$ for $i \geq 1$ and $\overline\Gamma_{\alpha} = \bigcup_{i \geq 1} \overline\Gamma_i$.
\end{lemma}

We can use this result to compute the stabilizer group of the $1$-cells, which are the reduced symbols, and the $2$-cells. These groups will be finite.
\begin{lemma}\label{lem: stab red sym}
    Given a reduced symbol $[\alpha, \beta]$ with minimal vertex $v$, let $\overline\Gamma_{[\alpha,\beta]}$ denote the stabilizer of the symbol and $\overline\Gamma_v$ denote the stabilizer of its minimal vertex. Then we have $\overline\Gamma_{[\alpha,\beta]} \subseteq \overline\Gamma_v$.
\end{lemma}

\begin{proof}
Let $\{c(p,n)\}_{n\geq 1}$ and $\{c(p',n)\}_{n\geq 1}$ denote the rays corresponding to $\alpha$ and $\beta$ respectively. Let $\gamma \in \overline\Gamma_{[\alpha, \beta]}$ be a stabilizer of the reduced symbol $[\alpha, \beta]$. Then either $\gamma$ fixes both $\alpha$ and $\beta$ or it takes one to the other. 
    
Suppose $\gamma$ fixes both cusps. This means either $\gamma$ fixes all vertices on the path that the symbol $[\alpha, \beta]$ traces on $\mathcal{T}$ or $\gamma$ shifts every vertex by some finite number $k$. The latter means $\gamma \cdot c(p,n) = c(p,n+k)$. This is impossible because the vertices $c(p,n)$ and $c(p,n+k)$ are two different orbits under the action of $\overline\Gamma$. So $\gamma$ fixes every vertex, meaning $\gamma \in \overline\Gamma_v$. 

Now suppose $\gamma \cdot \alpha = \beta$ and $\gamma \cdot \beta = \alpha$, meaning the two cusps are equivalent modulo the action of $\overline\Gamma$. This means that $\gamma \cdot c(p,n) = c(p',n)$ for all $n \geq 1$. So they have the same label on $\mathcal{T}$.
This cannot happen if $[\alpha, \beta]$ is a symbol of type $o$ or $s$. If $v$ is an $e$-vertex, this means that the edge $[v, c(p,1)]$ gets taken to the edge $[v, c(p',1)]$. If $v$ is an $ns$-vertex with $\alpha$ and $\beta$ $\overline\Gamma$-equivalent, then the edge $[v,o]$ gets taken to the edge $[v,o']$ where $o, o'$ are adjacent vertices of $v$. This means that $\gamma$ acts non-trivially on the edges of $v$ while fixing $v$, meaning $\gamma \in \overline\Gamma_v$. 
\end{proof}

The $2$-cells are triples of cusps where any pair is a reduced symbols. A stabilizer of a $2$-cell acts as a permutation of the three cusps. There are at most 2 minimal vertices to consider for each $2$-cell. We describe the $2$-cells existing on $\mathcal{K}$ and use Lemma \ref{lem: stab red sym} to compute the stabilizer groups.
\begin{itemize}
\item Triples of cusps where every pair is a $e(p)$-symbol corresponding to the same rational point $p$ on $E$: The stabilizer group of any reduced symbol is isomorphic to ${\rm{GL}_2}(\mathbb{F}_q)$, which acts transitively on the edges of $e(p)$ and hence on the cusps. The stabilizer group of such a $2$-cell is contained in $\overline\Gamma_{e(p)} \cong {\rm{GL}_2}(\mathbb{F}_q)$.
\item Triples of cusps where every pair is an $o$-symbol or triples of cusps where two pairs are $o$-symbols and one pair is an $s$-symbol: In this case, all three cusps are orbit-representative cusps. There is no element in $\overline\Gamma$ that can take one cusp to another. The stabilizer groups of such $2$-cells are contained in $\overline\Gamma_{o} \cong \mathbb{F}_q^{\times}$.
\item Triples of cusps where every pair is a $ns$-symbol corresponding to the same minimal vertex $v(l)$: There are a couple cases to consider. If the three cusps are contained in the same orbit modulo the action of $\overline\Gamma$, then we can use $\overline\Gamma_{v(l)}$ to permute the cusps. Else, at least two of the cusps are in different orbits, in which case there is no element in $\overline\Gamma$ taking one cusp to the other. The stabilizer group of such $2$-cells is contained in $\overline\Gamma_{v(l)} \cong \mathbb{F}_{q^2}^{\times}$.
\item Triples of cusps where two pairs are $ns$-symbols corresponding to the same minimal vertex and one pair is an $o$- or an $s$-symbol: Let $\alpha, \beta$ be the cusps forming an $o$- or $s$-symbol, and let $\gamma$ be the other cusp. Without loss of generality, suppose $\alpha, \beta \in R$ and $\gamma \in R^i$ for $i \neq 0$. Clearly, the only permutation possible is switching $\alpha$ or $\beta$ with $\gamma$ while fixing the other. However, any element that permutes $\alpha$ and $\gamma$ does not fix $\beta$ but sends $\beta$ to the $\overline\Gamma$-equivalent cusp in $R^i$. The stabilizer group of such $2$-cells is contained in $\overline\Gamma_{o} \cong \mathbb{F}_q^{\times}$.
\end{itemize}


\subsection{Relations among the Reduced Symbols} 
Given an elliptic curve $E/\mathbb{F}_q$, we now give a group presentation of $\mathbb{M}_2(\Gamma)$ using the stabilizer group of the minimal vertices, $1$- and $2$-cells.

Recall that $R$ is the set of orbit-representative cusps. Let $g$ be a coset representative of $\Gamma\backslash\overline\Gamma$. Let $g \cdot [\alpha, \beta] = [g \cdot \alpha, g \cdot \beta]$ where the action of the right hand side of the equation is given by fractional linear transformations. 

We restate Proposition \ref{thm: explicit red sym} for a congruence subgroup $\Gamma$ of $\overline\Gamma$ to describe the generators. Since there are only finitely many coset representatives of $\Gamma\backslash\overline\Gamma$, the number of generators are finite modulo the action of $\Gamma$.
\begin{corollary}\label{cor: exp red sym cong}
The reduced symbols over $\Gamma$ are given by the following.
\begin{itemize}
	\item The $e(\infty)$-symbols are given by $g \cdot [\infty, 0]$,
	\item The $e(p)$-symbols are given by $g \cdot [(y-m)/(x-l), S_p \cdot (y-m)/(x-l)]$,
	\item The $o$- and $s$-symbols are given by $g \cdot [\alpha, \beta]$ where $\alpha, \beta \in R$,
	\item The $ns$-symbols are $\overline\Gamma$-translates of $[\alpha, \beta]$ where $\alpha \in R$, $\beta \in R_{l'}^i$, and $v(l')$ is an $ns$-vertex.
\end{itemize}
\end{corollary}

It now remains to use the results above to find the relations among the reduced symbols in order to compute $E^1_{10}$ and $im(d^1_2)$.

First, consider the relations between $e(\infty)$-symbols. Any reduced symbol of type $e(\infty)$ is $\overline\Gamma$-equivalent to $[\infty, 0]$ and any $2$-cell is $\overline\Gamma$-equivalent to the $2$-cell given by $\{\infty, 0,1\}$. Letting $[g] = g \cdot [\infty, 0]$, we have the following set of relations.
\begin{equation}\label{e infty relations}
[g] - [g \cdot D]= 0, [g] + [g \cdot S] = 0, [g] + [g \cdot T]  + [g \cdot T^2] = 0.
\end{equation}
where $D$ is a diagonal matrix in ${\rm{GL}_2}(\mathbb{F}_q)$,  
    	 $S = 
	\begin{pmatrix}
	0 & 1 \\
	-1 & 0
	\end{pmatrix}, T = 
	\begin{pmatrix}
	0 & 1 \\
	-1 & 1
	\end{pmatrix}$.

Now, consider the $e(p)$-symbols where $p = (l,m)$ is a rational point on $E$ but not the point at $\infty$. There is an isomorphism $\phi: \overline\Gamma_{v_{e(p)}} \to {\rm{GL}_2}(\mathbb{F}_q)$ given by sending $x$ to $l$ and $y$ to $m$. Like the $e(\infty)$-case, there is one reduced symbol and one $2$-cell modulo the action of $\overline\Gamma$. Letting $[g] = g \cdot [(y-m)/(x-l), S_p \cdot (y-m)/(x-l)]$ where $[(y-m)/(x-l), S_p \cdot (y-m)/(x-l)]$ is the $e(p)$-symbol found using Proposition \ref{thm: explicit red sym}, we have the following set of relations.
\begin{equation}\label{e non infty relations}
[g] - [g \cdot D'] = 0, [g] + [g \cdot S'] = 0, [g] + [g \cdot T'] + [g \cdot T'^2] = 0.
\end{equation}
where $D' = \phi^{-1}(D), S' = \phi^{-1}(S), T' = \phi^{-1}(T)$.

Given $\alpha, \beta, \delta \in R$, any pair is an $o$- or $s$-symbol. No cusps in $R$ are equivalent modulo the action of $\overline\Gamma$, meaning we need all the reduced symbols and $2$-cells formed by the cusps in $R$. This gives us the the following set of relations.
\begin{equation}\label{o s relations}
g \cdot [\alpha, \beta] + g \cdot [\beta, \alpha] = 0, g \cdot [\alpha, \beta] + g \cdot [\beta, \delta] + g \cdot [\delta, \alpha] = 0.
\end{equation} 

Finally, for $ns$-symbols with minimal vertex $v(l')$, let $P_{l'}$ be a generator of $\overline\Gamma_{v_{l'}}$. Let $R$ be the set of orbit-representative cusps denoted by $\{\alpha_1, \dots, \alpha_{q+1}\}$ and $R_{l'}^i = \{P_{l'}^i \cdot \alpha_j : \alpha_j \in R\}$ for $i \in I = \{0,1, 2, ... q\}$. Let $\alpha_j^i$ denote the cusp $\alpha_j \in R_{l'}^i$ and let $[g]_{j_1, j_2}^{i_1, i_2}$ denote the symbol $g \cdot [\alpha^{i_1}_{j_1}, \alpha^{i_2}_{j_2}]$. Note that if $i_1 = i_2$, then the symbol is an $o$- or $s$-symbol.  Using powers of $P_{l'}$, $[\alpha^{0}_{j_1}, \alpha^{i_2}_{j_2}]$ for $i_2 \in \{1, 2, ..., q\}$ gives us a complete set of reduced symbols modulo $\overline\Gamma$. The element $P_{l'}^k$ sends the cusp $\alpha_i^j$ to $\alpha^{i'}_j$ where $i' < q+1$ such that $i' \equiv i+k \mod q+1$. This means on a reduced symbol, $P_{l'}^k$ ``rotates" the reduced symbol around $v(l')$ by $k$-steps. Using this action on the reduced symbols and the $2$-cells shows us the set of relations we need are as follows. 

First, assume that $q$ is odd. Then we need the relations given by the image of $d_2^1$ on the $2$-faces of the triples of cusps below. The remaining $2$-faces are $\overline\Gamma$-equivalent by some power of $P_{l'}$.
\begin{equation*}
[g]_{j_1,j_2}^{0, i} + [g \cdot P_{l'}^{i}]_{j_2, j_1}^{0,q+1-i} = 0 \text{ for } 1 \leq i \leq (q+1)/2,
\end{equation*}
\begin{equation*}
[g]_{j_1,j_2}^{0, i_1} + [g \cdot P_{l'}^{i_1}]_{j_2, j_3}^{0,i_2-i_1} + [g \cdot P_{l'}^{i_2}]_{j_3, j_1}^{0, q+1-i_2} = 0 \text{ for } 1 \leq i_1 < i_2 \leq (q+1)/2,
\end{equation*}
\begin{equation}\label{ns rel}
[g]_{j_1, j_2}^{0,i} + [g]_{j_2, j_3}^{i,i} + [g \cdot P_{l'}^{i}]_{j_3, j_1}^{0, q+1-i} = 0 \text{ for } 1 \leq i \leq (q+1)/2.
\end{equation}
If $q$ is even, we change the inequalities stated at the end of each relation above from $(q+1)/2$ to $q/2$. 

We are now able to state Theorem \ref{main 2} in a way that is suitable for explicit computations.
\begin{theorem}[Restatement of Theorem \ref{main 2}]\label{main result}
Let $\Gamma$ be a congruence subgroup of $\overline\Gamma$. The group of modular symbols over $\Gamma$, denoted $\mathbb{M}_2(\Gamma)$, is given by the set of generators stated in Corollary \ref{cor: exp red sym cong} and the set of relations (\ref{e infty relations}) to (\ref{ns rel}).
\end{theorem}

\begin{example}\label{ex: example relation}
	\normalfont We return to the elliptic curve $E/\mathbb{F}_3$ given in Example \ref{ex: example red sym}. The curve is given by the equation $y^2=x^3+x-1$ and the set of orbit-representative cusps is $R = \{\infty, y/(x+1), (y-1)/(x-1), (y+1)/(x-1)\}$. 
	
Let $g$ be a coset representative of $\Gamma\backslash\overline\Gamma$. Let $g \cdot [\alpha, \beta] = [g \cdot \alpha, g \cdot \beta]$ where the action of the right hand side of the equation is given by fractional linear transformations. Let $[g] = g \cdot [\infty, 0]$. We have the following set of relations of $e(\infty)$-symbols. 
\begin{itemize}
\item $[g] - [g \cdot D]= 0$, where $D = 
    	\begin{pmatrix}
        \pm 1 & 0 \\
        0 & \mp 1
    	\end{pmatrix}$, 
    
\item $[g] + [g \cdot S] = 0$, where $S = 
	\begin{pmatrix}
	0 & 1 \\
	-1 & 0
	\end{pmatrix}$,

\item $[g] + [g \cdot T]  + [g \cdot T^2] = 0$, where $T = 
	\begin{pmatrix}
	0 & 1 \\
	-1 & 1
	\end{pmatrix}$.
\end{itemize}
	
Now, let $[g]$ denote the $e((-1,0))$-symbol $g \cdot [y/(x+1), (x^2-x+1)/y]$. We have the following set of relations of $e((-1,0))$-symbols. 
\begin{itemize}
\item $[g] - [g \cdot D'] = 0$, where $D' = 
	\begin{pmatrix}
        \pm (x-x^2) & (x+1)y \\
        -y & \mp (x-x^2)
        \end{pmatrix}$,
\item $[g] + [g \cdot S'] = 0$, where $S' = 
	\begin{pmatrix}
        (1-x)y & x^3+x^2-x \\
        -x^2 & (x-1)y
        \end{pmatrix}$, 
\item $[g] + [g \cdot T'] + [g \cdot T'^2] = 0$, \\
where $T' = 
	\begin{pmatrix}
	(1-x)y - x^2 + x - 1 & (x+1)y + x^3 + x^2 - x \\
        -y-x^2 & (x-1)y + x^2-x-1
        \end{pmatrix}$.
\end{itemize}

Given $\alpha, \beta, \delta \in R$, we have the following set of relations of $o$- and $s$-symbols. 
\begin{itemize}
\item  $g \cdot [\alpha, \beta] + g \cdot [\beta, \alpha] = 0$,
\item $g \cdot [\alpha, \beta] + g \cdot [\beta, \delta] + g \cdot [\delta, \alpha] = 0$.
\end{itemize} 

Recall $P= \begin{pmatrix}
y+1 & 1-x^2 \\
x & -y+1 
\end{pmatrix}$ is the generator of $\Gamma_{v(0)}$. Let $\{\alpha_1, ... \alpha_4\}$ denote the cusps in $R$ and $R^i = \{P^i \cdot \alpha_j : \alpha_j \in R\}$ for $i \in I = \{0, 1, 2, 3\}$. Let $\alpha_j^i$ denote the cusp $\alpha_j \in R^i$ and let $[g]_{j_1, j_2}^{i_1, i_2}$ denote the symbol $g \cdot [\alpha^{i_1}_{j_1}, \alpha^{i_2}_{j_2}]$. Note that if $i_1 = i_2$, then the symbol is an $o$- or $s$-symbol. We have the following set of $ns$-symbols.

\begin{itemize}
\item $[g]_{j_1, j_2}^{0, i} + [g \cdot P^i]_{j_2, j_1}^{0,4-i} = 0$ for $i = 1, 2, 3$,
\item $[g]_{j_1, j_2}^{0, 1} + [g \cdot P]_{j_2, j_3}^{0,1} + [g \cdot P^2]_{j_3, j_1}^{0,2} = 0$,
\item $[g]_{j_1, j_2}^{0,i} + [g]_{j_2, j_3}^{i,i} + [g \cdot P^i]_{j_3, j_1}^{0,4-i} = 0$ for $i = 1, 2, 3$.
\end{itemize}
\end{example}

\section{Resolution of the Steinberg Module}\label{sec: res stein}
In this section, we finish the proof of Theorem \ref{thm: k resolution}. We need to show that the kernel of $\epsilon$ in Equation (\ref{eq: k res}) is equal to the image of $\partial_2$. The kernel of $\epsilon$ contains all formal sums of $1$-cells of $\mathcal{K}$ whose classes map to 0 in the Steinberg module. 

A relation between modular symbols is an equation of the form 
\[\sum_{i=1}^n m_i [\alpha_i, \beta_i] = 0 \text{ where } m_i \in \mathbb{Q}.\]
By Theorem \ref{uni sum}, we can reduce this to an equation where all the $[\alpha_i, \beta_i]$ are reduced symbols. 
A $1$-cell of $\mathcal{K}$ is a reduced symbol. This means any relation of reduced symbols $\sum_{i=1}^n m_i [\alpha_i, \beta_i] = 0$ is in the kernel of $\epsilon$. For any minimal vertex $v$, there are a finite number of cusps attached to $v$, which are the vertices of the $v$-complex. Any pair of cusps $\alpha$ and  $\beta$ gives a reduced symbol $[\alpha, \beta]$,which is a $1$-cell on the $v$-complex. It satisfies the \textit{two-term relation}: $[\alpha, \beta] + [\beta, \alpha] = 0$. Any triple of cusps $\alpha, \beta, \gamma$ satisfies the \textit{three-term relations}: $[\alpha, \beta] + [\beta, \gamma] + [\gamma, \alpha]= 0$. This is the image of $\partial_2$ on the $2$-cells of the $v$-complex. We call these relations the \textit{two-term and three-term relations over the minimal vertex $v$}. The three-term relations over each type of minimal vertex are given in Section \ref{subsec: stab groups}. 
\begin{itemize}
\item Over a single $e(p)$-vertex, there are $q+1$ cusps, whose pairs form reduced symbols of type $e(p)$. The three-term relation consists of $[\alpha, \beta] + [\beta, \gamma] + [\gamma, \alpha] = 0$. 
\item Over a single $o$-vertex, there are $q+1$ cusps, whose pairs form reduced symbols of type $o$ or type $s$. The three-term relation consists of $[\alpha, \beta] + [\beta, \gamma] + [\gamma, \alpha] = 0$ where at most one reduced symbol is type $o$.
\item Over a single $ns$-vertex, there are $(q+1)^2$ cusps, whose pairs form reduced symbols of type $ns$, type $o$, or type $s$. The three-term relation consists of $[\alpha, \beta] + [\beta, \gamma] + [\gamma, \alpha] = 0$ where at most one reduced symbol is type $o$ or type $s$.
\end{itemize}
Finishing Theorem \ref{thm: k resolution} amounts to proving the following.
\begin{theorem}\label{thm: relation}
Let $\sum_{i=1}^n m_i [\alpha_i, \beta_i] = 0$ be a relation of reduced symbols and let $v_1, v_2, \dots, v_k$ be the set of unique minimal vertices of the reduced symbols. Then the $\sum_{i=1}^n m_i [\alpha_i, \beta_i]$ can be written as a sum of the two-term relations and three-term relations over the $v_j$.
\end{theorem}

\subsection{Balanced Edges}
To prove this, we return to looking at modular symbols over $\mathcal{T}$ instead of their analog on $\mathcal{K}$. 
For an edge $[v,w]$, a nonzero modular symbol interpreted as a path can go from $v$ to $w$ or from $w$ to $v$ through this edge. We say that a symbol \textit{travels} on the edge $[v,w]$ when this happens.
\begin{definition}
An edge $[v, w]$ is \textbf{balanced} if the total coefficient $a$ of the symbols traveling from $v$ to $w$ equals the total coefficient $b$ of the symbols traveling from $w$ to $v$ i.e. $a=b$. Similarly, we say a cusp $\alpha$ is \textbf{balanced} if the total coefficient of the symbols entering $\alpha$ equal the total coefficient of symbols exiting $\alpha$.
\end{definition}

We first need the following results about modular symbols.
\begin{lemma}\label{all edges balanced}
   Given a relation of modular symbols, every edge that the modular symbol travel on is balanced.     
\end{lemma}

Definition \ref{def: mod sym} tells us that any modular symbols satisfy the following relations:
\begin{equation}\label{rel: all mod three-term}
    [\alpha, \beta] + [\beta, \alpha] = 0,
    [\alpha, \beta] + [\beta, \gamma] + [\gamma, \alpha] = 0. 
\end{equation}

We first prove the following result that applies to all modular symbols including the non-reduced symbols. To ease the notation of a symbol, we will notate a modular symbol $[\alpha, \beta]$ as $\tau$ with coefficient $m_{\tau}$ throughout this section. This notation allows us to denote the coefficient of $-\tau$ as $m_{-\tau}$ and write a relation as $0 = \sum_{i=1}^n m_{i} [\alpha_i, \beta_i] = \sum_{i=1}^n m_{\tau_i} \tau_i$. 

\begin{theorem}\label{thm: all mod satisfy rel}
    A relation of modular symbols can be written as a sum of the two-term and three-term relations given in (\ref{rel: all mod three-term}).
\end{theorem}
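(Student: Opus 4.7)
The plan is to reduce any formal sum of modular symbols to a canonical form indexed by cusps, using relations (\ref{rel: all mod two-term}) and (\ref{rel: all mod three-term}), and then to invoke a balanced-cusps property that must hold whenever the sum genuinely vanishes in the modular symbol group. I fix an arbitrary base cusp $\omega \in \mathbb{P}^1(F)$ and let $R$ denote the subgroup of formal $\mathbb{Z}$-combinations of modular symbols generated by all instances of the two- and three-term relations.

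First, combining the three-term relation $\{\alpha, \beta\} + \{\beta, \omega\} + \{\omega, \alpha\} = 0$ with the two-term identification $\{\omega, \alpha\} = -\{\alpha, \omega\}$ yields the normal form $\{\alpha, \beta\} \equiv \{\alpha, \omega\} - \{\beta, \omega\} \pmod{R}$. Substituting this into the hypothesized relation $\sum_i n_{\tau_i} \tau_i = 0$, with $\tau_i = \{\alpha_i, \beta_i\}$, and collecting terms at each cusp, one obtains
\[
\sum_i n_{\tau_i} \tau_i \;\equiv\; \sum_{\alpha \neq \omega} c_\alpha \, \{\alpha, \omega\} \pmod{R},
\]
where $c_\alpha$ is the net flow of coefficients at $\alpha$: the sum of $n_{\tau_i}$ over those $i$ with $\alpha_i = \alpha$ minus the corresponding sum over those $i$ with $\beta_i = \alpha$.

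The decisive step is to show that $c_\alpha = 0$ for every cusp $\alpha$, so that the right-hand side is literally zero and the original formal sum lies in $R$. This is the balanced-cusps phenomenon introduced in Section \ref{prop: all rel}. I would justify it through the Ash-Rudolph description: by Propositions \ref{prop: ar symbols} and \ref{prop: ky equal ar}, every class $\{\alpha, \beta\}$ lifts to an apartment in the spherical building $\mathbf{S}$, and the natural boundary-type assignment $\{\alpha, \beta\} \mapsto [\alpha] - [\beta]$ gives a well-defined $\mathbb{Z}$-linear map from the modular symbol group to the free abelian group on $\mathbb{P}^1(F)$. A vanishing sum in the modular symbol group must map to $0$ there, which immediately forces $c_\alpha = 0$ at every cusp.

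The main obstacle is exactly this last step: rigorously producing a boundary-at-cusps map from $H^{BM}_1(\Gamma\backslash\mathcal{T}, \mathbb{Z})$ to a cusp module under which a modular-symbol relation descends to a linear relation in the free abelian group on cusps. The geometric content to invoke is that distinct cusps correspond to distinct ends of $\mathcal{T}$, so the infinite tails $(c(p,n))_{n \geq 1}$ detect the normal-form symbols $\{\alpha, \omega\}$ independently for distinct $\alpha$; equivalently, a cusp $\alpha$ can be "read off" any path passing through it by following its tail to infinity. Once this linear independence of the $\{\alpha, \omega\}$ across distinct cusps is established, the combinatorial reduction of the previous two paragraphs immediately expresses $\sum_i n_{\tau_i}\tau_i$ as a finite $\mathbb{Z}$-combination of instances of (\ref{rel: all mod two-term}) and (\ref{rel: all mod three-term}), completing the proof.
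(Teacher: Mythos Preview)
Your combinatorial reduction is correct but takes a genuinely different route from the paper. The paper argues by descent on $\sum_i |n_{\tau_i}|$: after using two-term relations to ensure that $n_\tau \neq 0$ implies $n_{-\tau}=0$, it picks a symbol $\{\alpha,\beta\}$ in the sum, uses balancedness at $\beta$ to locate some $\{\beta,\gamma\}$ also present, and subtracts the three-term relation $\{\alpha,\beta\}+\{\beta,\gamma\}+\{\gamma,\alpha\}$, lowering the total weight by one. Your normal-form argument with a fixed base cusp $\omega$ does the same job in a single stroke and isolates the structure more cleanly: modulo $R$ the formal group is already spanned by the symbols $\{\alpha,\omega\}$, so the whole theorem reduces to their linear independence. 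The paper's version is more algorithmic; yours makes it transparent that the only nontrivial content is the balanced-cusps step.

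On that step, both arguments rest on the same input, and you are right to flag it as the real obstacle; the paper simply asserts it in Section~\ref{prop: all rel} and uses it without further comment. Your proposed justification via a boundary map $\{\alpha,\beta\}\mapsto[\alpha]-[\beta]$ into $\mathbb{Z}[\mathbb{P}^1(F)]$ is the natural one, but be careful: the modular symbols here are classes in $H^{BM}_1(\Gamma\backslash\mathcal{T},\mathbb{Z})$ and are $\Gamma$-invariant by Proposition~\ref{prop: all rel}(3), so any boundary map that is well defined on these classes lands in the free group on $\Gamma$-orbits of cusps, not on $\mathbb{P}^1(F)$ itself. Your heuristic that ``distinct cusps correspond to distinct ends of $\mathcal{T}$'' does not survive passage to the quotient $\Gamma\backslash\mathcal{T}$. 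To close this, you should either work upstairs in the universal modular-symbol group on $\mathbb{P}^1(F)$ (before quotienting by $\Gamma$), where your boundary map is literally well defined and the independence of the $\{\alpha,\omega\}$ is immediate, or else reformulate the balanced-cusps claim at the level of $\Gamma$-orbits.
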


\begin{proof}
Given a relation of modular symbols $\sum_{i=1}^n m_{\tau_i} \tau_i = 0$, we reduce the sum $\sum_{i=1}^n |m_{\tau_i}|$ to zero modulo the relations. 

Use the two-term relation so that for any modular symbol $\tau$ with $m_{\tau} \neq 0$, we have $m_{-\tau} = 0$. 
    
Choose a cusp $\alpha$. There is some symbol exiting $\alpha$ and entering another cusp $\beta$. Let this symbol be $\tau$. Since every cusp is balanced, there is some symbol leaving $\beta$ and entering another cusp $\gamma$ with $\alpha \neq \gamma$ since $n_{-\tau} = 0$. Finally, consider the symbol from $\gamma$ to $\alpha$. The sum $[\alpha, \beta] + [\beta, \gamma] + [\gamma, \alpha]$ is equal to zero.

Then we have 
    \begin{gather*}
    0 = \sum_{i=1}^n m_{\tau_i} \tau_i = \sum_{i=1}^n m_{\tau_i} \tau_i - [\alpha, \beta] - [\beta, \gamma] - [\gamma, \alpha] \\
    = \sum_{i=1}^n m_{\tau_i} \tau_i - [\alpha, \beta] - [\beta, \gamma] + [\alpha, \gamma].
    \end{gather*}
    We have removed two symbols at the expense of adding one with every cusp still balanced. This means that $\sum_{i=1}^n |m_{\tau_i}|$ was reduced by one. Since our sum $\sum_{i=1}^n |m_{\tau_i}|$ is finite, we can continue in this way until it is zero modulo the relations
    \end{proof}

We will use this proof method in the subsequence subsections. Note that Theorem \ref{thm: all mod satisfy rel} is a result about all modular symbols including non-reduced symbols. This is not a proof of Theorem \ref{thm: relation}. We need Theorem \ref{thm: all mod satisfy rel} to prove Lemma \ref{all edges balanced}, which is needed in the subsequence sections.

\begin{proof}[Proof of Lemma \ref{all edges balanced}] 
Given a relation of symbols that is of either form given in (\ref{rel: all mod three-term}), the edges of $\mathcal{T}$ that the symbols travel on are balanced. The desired result follows immediately since we have shown in Theorem \ref{thm: all mod satisfy rel} that any relation of modular symbols is a sum of the two-term and three-term relations given in (\ref{rel: all mod three-term}).
\end{proof}

\subsection{Interactions between Minimal Vertices}
From this section onwards, a relation of symbols is always a relation of reduced symbols. It now remains to prove Theorem \ref{thm: relation}. 
Given a relation of symbols $\sum_{i=1}^n m_i [\alpha_i, \beta_i] = 0$, let $\mathcal{V}$ be the set of minimal vertices that identify the symbols. Suppose we could write the relation as a sum of relations where each relation involves only symbols corresponding to one minimal vertex $v \in \mathcal{V}$. That is, we have the following:
\[
    0 = \sum_{i=1}^n m_i [\alpha_i, \beta_i] = \sum_{v \in \mathcal{V}} \left(\sum_{j=1}^{n_v} m_{j,v} [\alpha_j, \beta_j]_v \right)
\]
such that $[\alpha_j, \beta_j]_v$ is a $v$-symbol and $\sum_{j=1}^{n_v} m_{j,v} [\alpha_j, \beta_j]_v = 0$ for all $v \in \mathcal{V}$. 
We say that the relation \textit{decomposes over the minimal vertices}. However, it is clear that this cannot happen since certain three-term relations contain two different types of symbols. Each minimal vertex has a Serre invariant $N(E)$. Since each minimal vertex has corresponding reduced symbols, we can associate the invariant to the symbols as well. The two cusps that form a $v$-symbol with Serre invariant $n$ could be cusps that are attached to multiple minimal vertices with invariant strictly less than $n$. For example, the two cusps forming an $o$-symbol could be cusps attached to multiple $ns$-vertices. This means over an $ns$-vertex $v$, the sum of $ns$-symbols might not form a relation. We may need reduced symbols corresponding to the $o$- and $s$-vertices adjacent to and two degrees away from $v$ to form a relation. We address this issue by considering the different ways that minimal vertices interact as addressed in Section \ref{sec: simplicial k}. 

We prove Theorem \ref{thm: relation} for three cases where the symbols satisfy a certain condition on the invariant. In each case, we deal with the issue of decomposing over the minimal vertices. Combining these results will prove Theorem \ref{thm: relation}.

\subsection{Symbols with $N(E) \geq 0$}
Suppose we are given a relation consisting of only $e(p)$ and $s$-symbols. From Section \ref{sec: simplicial k} and Figure \ref{fig: quot_to_large}, we have the following possible interactions. There are no interactions between two $e$-vertices or two $s$-vertices with different labels or an $e(p)$-vertex and a $s$-vertex. A set of $e$-vertices or a set of $s$-vertices with the same labels $e(p)$ or $v(l)$ respectively can interact trivially.

\begin{lemma}\label{lem: triv int 0}
Given a relation of symbols $\sum_{i=1}^n m_i [\alpha_i, \beta_i] = 0$, let $\mathcal{V}$ denote the set of minimal vertices that identify the symbols in our relation. Furthermore, suppose all $v_i \in \mathcal{V}$ have the same label, either $e(p)$ or $v(l)$ that interact trivially pairwise at a cusp $\alpha$. Such a relation can be written as the following. 
\[0 = \sum_{i=1}^n m_i [\alpha_i, \beta_i] = \sum_{v \in \mathcal{V}} \left(\sum_{j=1}^{n_{v}} m_{j,{v}} [\alpha_j, \beta_j]_{v} \right)\]
where $\sum_{j=1}^{n_v} m_{j,v} [\alpha_j, \beta_j]_v = 0$ for all $v \in \mathcal{V}$.
\end{lemma}
\begin{proof}
We first prove this when $\mathcal{V}$ has two elements. Suppose that the two vertices have the same label $e(p)$. The cusp $\alpha$ is only the cusp that can appear in both $v$- and $w$-symbols. Let $c(p_v,r)_{r \geq 1}$ and $c(p_w,r)_{r \geq 1}$ denote the rational ends adjacent to $v$ and $w$ respectively that correspond to $\alpha$. There is some $r' \geq 2$ such that for all $r'' > r'$, $c(p_v, r'') = c(p_w, r'')$. Call the vertex $c(p_v, r') = c(p_w, r')$ the \textit{initial common vertex} and $r'$ the \textit{identifying number}. There are both $v$- and $w$-symbols traveling on the edges between these vertices. Suppose that the total coefficients of symbols entering the cusp $\alpha$ is $a > 0$. Since we have a relation, it follows that the total coefficients of symbols exiting $\alpha$ is $-a$. For $r'' < r'$, only $v$- and $w$-symbols travel on the edges $[c(p_v, r'-1), c(p_v, r')]$ and $[c(p_w, r'-1), c(p_w, r')]$ respectively. call these edges the \textit{branching edges}. Every edge is balanced by \ref{all edges balanced}. Let $b > 0$ and $c > 0$ be the total sum of the coefficient of symbols traveling towards $\alpha$. It follows that $a = b + c$. The total coefficients of $v$-symbols entering $\alpha$ is $b$ and exiting $\alpha$ is $-b$. We have a similar result for $w$-symbols and $c>0$. This means the set of $v$-symbols and the set of $w$-symbols each form a relation. The case for when the two vertices are both type $s$ with the same label is the same argument as the above except that $r' \geq 1$. Induction on the number of elements in $\mathcal{V}$ and modifying the proof above gives the desired result. Each pair of vertices has an initial common vertex with an identifying number. The initial common vertex we need is the one with the largest identifying number. 
\end{proof}

We say that the relation \textit{decomposes over each minimal vertex} when Lemma \ref{lem: triv int 0} is satisfied. For each minimal vertex $v$, the set of $v$-symbols form a relation. Any relation involving symbols with Serre invariant $N(E) > 0$ will decompose over each minimal vertex, which is either type $e(p)$ or type $s$. The only possible relation over a single $s$-vertex is a constant multiple of the two-relation relation for $s$-symbols. What is left is to prove the following:

\begin{lemma}\label{e sum of relations}
A relation of symbols over one $e(p)$-vertex is a sum of the two-term and three-term relations over the $e(p)$-vertex.
\end{lemma}

\begin{proof}
The strategy is the same as Theorem \ref{thm: all mod satisfy rel}. The relation involves only the $q+1$ cusps attached to the $e(p)$-vertex and any pair or triple of cusps gives a valid two-term or three-term relation. Hence, a similar argument to the proof of Theorem \ref{thm: all mod satisfy rel} gives the desired result.
\end{proof}

\subsection{Symbols with $N(E) \geq -1$}
The interactions are as follows. We know $e(p)$-vertices could interact trivially with $o$-vertices. A set of $o$-vertices could interact with each other trivially or non-trivially depending on the cusp. We state a result similar to Lemma \ref{lem: triv int 0}.

\begin{lemma}\label{lem: triv int -1}
Given a relation of symbols $\sum_{i=1}^n m_i [\alpha_i, \beta_i] = 0$, let $\mathcal{V}$ denote the set of minimal vertices that identify the symbols in our relation. Furthermore, suppose all $v_i \in \mathcal{V}$ consists of $e(p)$-vertices with the same label or $o$-vertices that interact trivially pairwise at a cusp $\alpha$. Such a relation can be written as the following. 
\[0 = \sum_{i=1}^n m_i [\alpha_i, \beta_i] = \sum_{v \in \mathcal{V}} \left(\sum_{j=1}^{n_{v}} m_{j,{v}} [\alpha_j, \beta_j]_{v} \right)\]
where we allow $[\alpha_j, \beta_j]_v$ to be an $s$-symbol if $v$ is type $o$ and $\sum_{j=1}^{n_v} m_{j,v} [\alpha_j, \beta_j]_v = 0$ for all $v \in \mathcal{V}$.
\end{lemma}

\begin{proof}
In this case, we need to first prove the following two cases: either the two minimal vertices interacting trivially are types $e(p)$ and $o$ or the two vertices are both type $o$. A similar argument to the proof of Lemma \ref{lem: triv int 0} suffices. The initial common vertex could be a vertex of label $v(l)$ where $l \in \mathbb{P}^1(\mathbb{F}_q)$ and we let the identifying number take nonpositive values. Furthermore, a branching edge on which $o$-symbols travel could also have $s$-symbols traveling.
\end{proof}

This means that given a relation of symbols with $N(E) \geq -1$, over any minimal vertex of type $e(p)$, the set of the $e(p)$-symbols form a relation.
This allows us to consider a relation involving only $s$- and $o$-symbols and non-trivial interactions between the $o$-vertices. 
An $o$-vertex $O$ has $q+1$ cusps attached. The vertex $O$ sees a total of $\binom{(q+1)}{2}$ symbols of type $s$ and $o$. We do need to be careful since not every pair of cusps is an $o$-symbol.
\begin{lemma}\label{result for one o}
A relation of symbols involving exactly one $o$-vertex with its adjacent $s$-vertices is a sum of the two-term relation and three-term relations over the $o$-vertex and the two-term relations over the adjacent $s$-vertices.
\end{lemma}

\begin{proof}
Again, the strategy is the same as Theorem \ref{thm: all mod satisfy rel}. The relation involves only the $q+1$ cusps attached to the $o$-vertex and its adjacent $s$-vertices. Any pair or triple of cusps gives a valid two-term or three-term relation, and by Lemma \ref{all edges balanced}, every edge is balanced. In particular, the edges of the $o$-vertex are balanced. This means a similar argument to the proof of Theorem \ref{thm: all mod satisfy rel} gives the desired result.
\end{proof}

Two $o$-vertices can interact nontrivially through the cusps of the $s$-vertex. (See Figure \ref{fig: multiple_o} for an example). We need to show that we can still decompose our relation over all the $o$-vertices. 

\begin{figure}[htb]
\centering
\includegraphics[width=7cm, height=6cm]{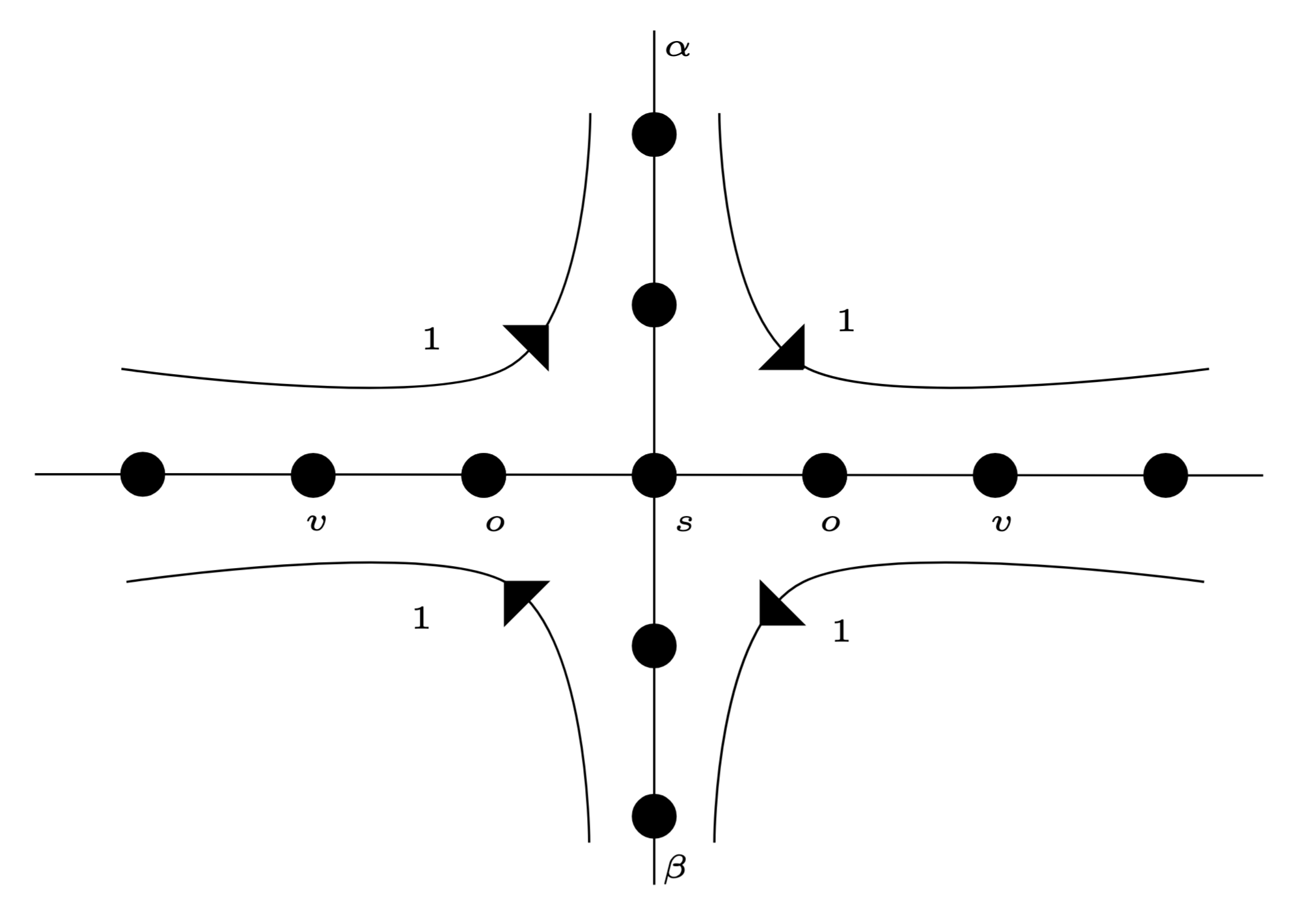}
\caption{Two $o$-vertices interacting nontrivally due to a shared $s$-vertex.}\label{fig: multiple_o}
\end{figure}

\begin{lemma}\label{result for multiple o}
 A relation of symbols involving only $o$ and $s$-symbols is a sum of the two-term and three-term relations over the $o$-vertices and the two-term relations over the adjacent $s$-vertices.
\end{lemma}

\begin{proof}
The idea is to apply Lemma \ref{result for one o} to the $o$-vertices. We first prove we can do this over two o-vertices $O \text{ and } O'$. Suppose they interact nontrivially through a $s$-vertex $S$ with cusps $\alpha$ and $\beta$.

Use the two-term relations over the $o$- and $s$-vertices so that all $O$-symbols involving $\alpha$ are written as entering $\alpha$ and $O$-symbols involving $\beta$ are written as leaving $\beta$. Write all $O'$-symbols as leaving $\alpha$ and entering $\beta$. This also ensures that if $n_{\tau} \neq 0$, then $n_{-\tau} = 0$. Without loss of generality, assume the total contribution of the coefficients of the $O$-symbols is $a>0$. Since the edge $[O,S]$ is balanced, the contribution of $O$-symbols to $\beta$ is $-a$. Assume the contribution of the $S$-symbol to $\beta$ is $a'>0$ and $-a'$ to $\alpha$. That is, the symbol $[\alpha, \beta]$ has coefficient $a'>0$. This means the total contribution of $O'$-symbols to $\alpha$ is $a-a'$ and $-(a-a')$ to $\beta$ since the edge $[O',S]$ is balanced.

Add the symbols $a[\alpha, \beta] + a[\beta,\alpha]$ to our sum of symbols. The total contribution of $O$-symbols and the symbol $a[\alpha, \beta]$ to the cusp $\alpha$ is $a-a = 0$. Similarly, the total contribution to $\beta$ is $0$. The total contribution of $O'$-symbols and the symbol $(a-a')[\beta, \alpha]$ to the cusp $\alpha$ is $(a-a')-(a-a')= 0$ and similarly, $0$ to $\beta$. We can now apply Lemma \ref{result for one o} to $O$ and $O'$.

If we see multiple $o$-vertices and $s$-vertices interacting, we can construct a finite tree where the vertices are the $o$-vertices and there is an edge if there is an interaction. There are finitely many $o$-vertices in our relation so the tree must have leaves. At any leaf $O$, we can add the necessary $s$-symbols using the argument above to apply Lemma \ref{result for one o} to $O$, eliminating $O$ from our tree. Continuing in this way and induction on the number of vertices gives us the desired result. 
\end{proof}

\subsection{Symbols with $N(E) \geq -2$}
Now consider the case where all four reduced symbols are involved in a given relation. The interactions that need to be addressed are as follows.
An $e(p)$-vertex can interact trivially an $ns$-vertex. There can be both trivial and non-trivial interactions with $ns$-vertices all with different labels, but no interactions if they all have the same label.
A result and proof similar to Lemma \ref{lem: triv int -1} applied to the case of $e$- and $ns$-vertices allows us to consider a relation only involving $s$-, $o$- and $ns$-symbols and non-trivial interactions between $ns$-vertices for the remainder of this section.

An $ns$-vertex $N$ has $q+1$ $o$-vertices with $q+1$ cusps attached, giving $N$ a total of $(q+1)^2$ cusps. The vertex $N$ sees a total of $\binom{(q+1)^2}{2}$ symbols of type $s, o, \text{ and } ns$. We call the $o$ and $s$-vertices adjacent or two edges away from $N$ as the \textit{corresponding} $o$ and $s$ vertices of $ns$. The cusps attached to this $ns$-vertex are also the cusps attached to the corresponding $o$ and $s$-vertices. 

We first consider a relation with only one $ns$-vertex.
\begin{lemma}\label{result for one v not square}
 A relation of symbols over exactly one $ns$-vertex and its adjacent $o$-vertices and their adjacent $s$-vertices is a sum of the two-term and three-term relations over the $ns$-vertex and its corresponding $s$- and $o$-vertices.
\end{lemma}

\begin{proof}
The strategy is the same as Theorem \ref{thm: all mod satisfy rel} and the desired result follows by the similar reasons given in the proof of Lemma \ref{result for one o} applied appropriately.
\end{proof}

\begin{figure}[htb]
\centering
\includegraphics[width=7cm, height=6cm]{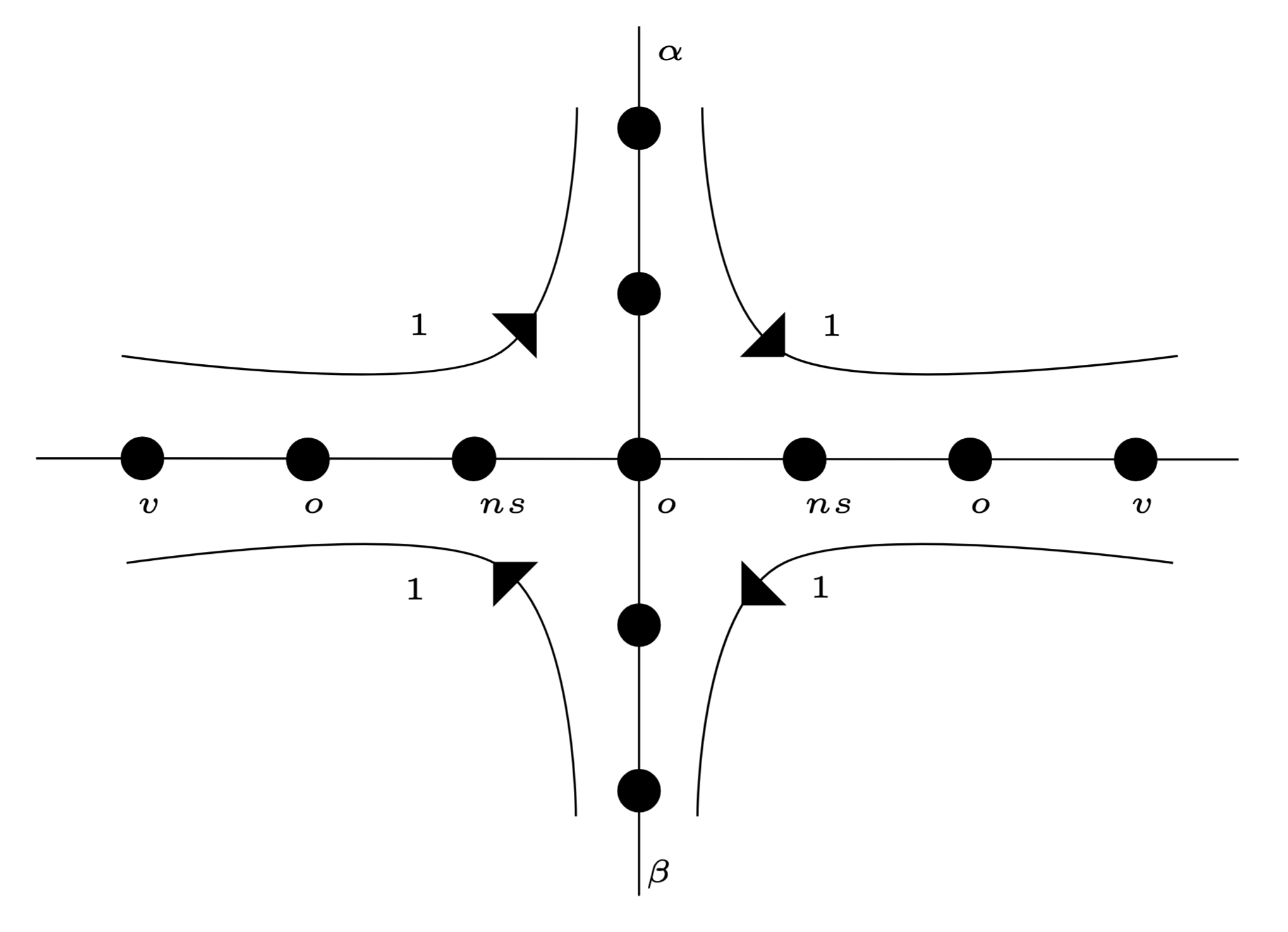}
\caption{Two $ns$-vertices interacting nontrivially due to a shared $o$-vertex.}
\label{fig: multiple_v_non1}
\end{figure}

There can be nontrivial interactions between $ns$-vertices through an $o$ or $s$-vertex. (See Figure \ref{fig: multiple_v_non1} for an example). We stat and prove a similar to Lemma \ref{result for one v not square}.

\begin{lemma}\label{result for many v non square}
A relation of $s$, $o$ and $ns$-symbols is a sum of the two-term and three-term relations over the $ns$-vertices and their corresponding $s$- and $o$-vertices.
\end{lemma}
\begin{proof}
    Like Lemma \ref{result for one v not square}, we want to apply Lemma \ref{result for one v not square} to the $ns$-vertices. We first prove this for two $ns$-vertices $N$ and $N'$ interacting nontrivially through a shared $o$-vertex $O$.     
First, use the two-term relations to simplify our relation. If $n_{\tau} \neq 0$, we want $n_{-\tau} = 0$. 

    We first consider the vertex $N$ and its $ns$-symbols. Like the proof of Lemma \ref{result for one v not square}, we can identify cusps $\alpha_1, \dots, \alpha_n$ attached to $O$ such that the total contribution of $N$-symbols to $\alpha_i$ is positive. If necessary, use the two-term relation so that all $N$-symbols enter $\alpha_i$. The total contribution of the coefficients of symbols entering $\alpha_i$ is $a_i > 0$. Likewise, $\beta_1, \dots, \beta_m$ are the cusps where all $N$-symbols leave $\beta_j$ with contribution $b_j < 0$. Every edge including $[N,O]$ is balanced so $\sum_{i=1}^n a_i = -\sum_{j=1}^m b_j$. For each $\alpha_i$, add the symbols $a_i[\alpha_i, \beta_j] + a_i[\beta_j, \alpha_i]$ for $j$ such that $|b_j| \geq a_i$. This ensures the net contribution of the $N$-symbols entering $\alpha_i$ and the symbol $a_i[\alpha_i, \beta_j]$ to the cusp $\alpha_j$ is $a_i - a_i = 0$ and that $b_j + a_i \leq 0$. When we have gone through every $\alpha_i$, the net contribution of the $N$-symbols and the symbols $a_i[\alpha_i, \beta_j]$ to every cusp $\alpha_i, \beta_j$ is zero. This means the $N$-symbols and the symbols $a_i[\alpha_i, \beta_j]$ form a relation.
    
    From the original relation, we only considered the $N$-symbols. The remaining original symbols of type $N'$, $o$ and $s$ along with the added symbols $a_i[\beta_j, \alpha_i]$ must form a relation. So we have decomposed our relation over $N$ and $N'$.

    If we multiple $ns$-vertices interacting, we construct a finite tree where $ns$-vertices are the vertices and there is an edge if there is an interaction. This tree must have at least one leaf. Start at a leaf $N$ and use the argument given above and one similar to the proof of Lemma \ref{result for one v not square} to delete $N$. Continue eliminating the leaves. At worst, we are left with a relation of $o$ and $s$-symbols, which we know how to deal with.
\end{proof}

\subsection{Proof of Theorem \ref{thm: relation}}
We now prove Theorem \ref{thm: relation}. 
\begin{proof}
The strategy is similar to Theorem \ref{thm: all mod satisfy rel}. Rewriting the relation as $\sum_{i=1}^n m_{\tau_i} \tau_i = 0$, we reduce the sum $\sum_{i=1}^n |m_{\tau_i}|$ to zero modulo the two-term and three-term relations over the minimal vertices.

We know that the symbols corresponding to a single $e(p)$-vertex each form a relation. We use the two and three-term relations over $e$-vertices to delete these symbols. We do this over every $e(p)$-vertex and delete all $e(p)$-symbols. Next, we apply Lemma \ref{result for many v non square} 
over the set of all $ns$-vertices and use the two-term and three-term relations over the $ns$-vertices to delete all $ns$-symbols. At worst, we are left with a relation with only $o$- and $s$-symbols. We apply Lemma \ref{result for multiple o} over each $o$-vertex and use the two-term and three-term relations over the $o$-vertices to delete all $o$-symbols.  Finally, we remain with a relation of only $s$-symbols. We decompose over the set of all $s$-vertices. There is only a two-term relation so all the symbols vanish, giving us the desired result. 
\end{proof}

\section*{Acknowledgement}
The author would like to thank Paul Gunnells for the immense amount of help and guidance he has provided and Harris Daniels for his support. He would also like to thank Jeremy Teitelbaum for showing interest in these results and Avner Ash for providing insights about the resolutions of the Steinberg module. Finally, the author would like to thank the anonymous referee for the extremely helpful comments and feedback during the review process as well as the editors for a speedy review.

\bibliographystyle{alpha}
\bibliography{references}

\begin{thebibliography}{EVGS13}

\bibitem[AGM12]{ash_resolution}
Avner Ash, Paul~E. Gunnells, and Mark McConnell.
\newblock Resolutions of the steinberg module for gl(n).
\newblock {\em Journal of Algebra}, 349(1):380--390, 2012.

\bibitem[Bro82]{brown}
Kenneth~S. Brown.
\newblock {\em Cohomology of groups}, volume~87 of {\em Graduate Texts in
  Mathematics}.
\newblock Springer-Verlag, New York-Berlin, 1982.

\bibitem[EVGS13]{philippe}
Philippe Elbaz-Vincent, Herbert Gangl, and Christophe Soulé.
\newblock Perfect forms, k-theory and the cohomology of modular groups.
\newblock {\em Advances in Mathematics}, 245:587--624, 2013.

\bibitem[Gra82]{grayson}
Daniel~R. Grayson.
\newblock Finite generation of {$K$}-groups of a curve over a finite field
  (after {D}aniel {Q}uillen).
\newblock In {\em Algebraic {$K$}-theory, {P}art {I} ({O}berwolfach, 1980)},
  volume 966 of {\em Lecture Notes in Math.}, pages 69--90. Springer,
  Berlin-New York, 1982.

\bibitem[KY25]{kondo}
Satoshi Kondo and Seidai Yasuda.
\newblock {\em Arithmetic Quotients of the Bruhat-Tits Building for Projective
  General Linear Groups in Positive Characteristic}, volume Memoirs of the
  American Mathematical Society, no. 1547.
\newblock American Mathematical Society, 2025.

\bibitem[Man72]{manin}
Ju.\~I. Manin.
\newblock Parabolic points and zeta functions of modular curves.
\newblock {\em Izv. Akad. Nauk SSSR Ser. Mat.}, 36:19--66, 1972.

\bibitem[MP12]{ponto}
J.~P. May and K.~Ponto.
\newblock {\em More concise algebraic topology}.
\newblock Chicago Lectures in Mathematics. University of Chicago Press,
  Chicago, IL, 2012.
\newblock Localization, completion, and model categories.

\bibitem[MS24]{mason_auto}
A.~Mason and Andreas Schweizer.
\newblock On the automorphisms of the drinfeld modular groups.
\newblock {\em Journal of Algebra and Its Applications}, 11 2024.

\bibitem[Ser03]{serre}
Jean-Pierre Serre.
\newblock {\em Trees. {Transl}. from the {French} by {John} {Stillwell}.}
\newblock Springer Monogr. Math. Berlin: Springer, corrected 2nd printing of
  the 1980 original edition, 2003.

\bibitem[Ste07]{stein}
William Stein.
\newblock {\em Modular forms, a computational approach. {With} an appendix by
  {Paul} {E}. {Gunnells}}, volume~79 of {\em Grad. Stud. Math.}
\newblock American Mathematical Society (AMS), 2007.

\bibitem[Tak93]{takahashi}
Shuzo Takahashi.
\newblock The fundamental domain of the tree of {{\(GL(2)\)}} over the function
  field of an elliptic curve.
\newblock {\em Duke Math. J.}, 72(1):85--97, 1993.

\bibitem[Tei92]{teitelbaum}
Jeremy~T. Teitelbaum.
\newblock Modular symbols for {{\(F_ q(T)\)}}.
\newblock {\em Duke Math. J.}, 68(2):271--295, 1992.

\end{thebibliography}

\end{sloppypar}

\end{document}